\theoremstyle{plain}
\newtheorem{theorem}{Theorem}[section]
\newtheorem{lemma}[theorem]{Lemma}
\newtheorem{cor}[theorem]{Corollary}
\theoremstyle{definition}
\newtheorem{remark}[theorem]{Remark}
\newtheorem{example}[theorem]{Example}
\numberwithin{equation}{section}
\newtheorem*{theorem*}{Theorem A}
\def\be{\begin{equation}}
\def\ee{\end{equation}}
\begin{document}

\title[The Monge-Amp\`ere equation in cones and polygonal domains ]
{ The Monge-Amp\`ere equation in cones and polygonal domains }
\author[Huang]{Genggeng Huang}
\address{School of Mathematical Sciences\\ Fudan University\\Shanghai, China} \email{genggenghuang@fudan.edu.cn }

\author[Shen]{Weiming Shen}
\address{School of Mathematical Sciences\\
Capital Normal University\\
Beijing, 100048, China}
\email{wmshen@aliyun.com}

\begin{abstract}
We study asymptotic behaviors of solutions to the Monge-Amp\`ere equation in cones and use the expansion as a tool to study the regularity of solutions
in polygonal domains.
\end{abstract}

\thanks{The first author is partially supported by NSFC 12141105. The second author is partially supported by NSFC 12371208.
}

\maketitle


\section{Introduction}\label{sec-Intro}
In this paper, we study the asymptotic behaviours of the solutions for the Dirichlet problem of the Monge-Amp\`ere equation near the vertices of the convex polygons in $\mathbb R^2$.

The Dirichlet problem for Monge-Amp\`ere equation, i.e the following problem
\begin{equation}\label{intro-1}
\begin{split}
&\det D^2 u=f,\quad \text{in}\quad \Omega,\\
&u=\varphi,\quad \text{on}\quad \partial\Omega
\end{split}
\end{equation}
is a classical problem in the study of Monge-Amp\`ere equation. When $f\ge c_0>0$ in $\Omega$, \eqref{intro-1} is the non-degenerate case. The global $C^{2,\alpha}$-estimate is important for the global smoothness of the solution $u$. Since for $\varphi=0$, the eigenvalues of the second order tangential derivatives of $u$ are proportional to the principle eigenvalues of $\partial\Omega$. It is natural to assume that $\Omega$ is smooth and uniformly convex.  For $f\in C^{1,1}(\overline\Omega)$, the global $C^{2,\alpha}$ estimates are obtained in the works Ivockina \cite{Iv1980}, Krylov \cite{Kry1983} and Caffarelli-Nirenberg-Spruck \cite{CaffarelliNirenbergSpruck1984}. Since the above works need to differentiate \eqref{intro-1} twice to get the estimate $\|u\|_{C^{1,1}(\overline{\Omega})}$,  the condition $f\in C^{1,1}(\overline\Omega)$ can't be removed. People are also interested in the sharp conditions on the right-hand side and boundary data for global $C^{2,\alpha}$ estimates.  Wang \cite{Wang1996} made a first trial to relax the conditions to $f\in C^{0,1}(\overline{\Omega})$ and $\partial\Omega\in C^3$, $\varphi\in C^{3}(\overline{\Omega})$. In \cite{Wang1996}, he also constructed examples to show that $\partial\Omega\in C^3$, $\varphi\in C^{3}(\overline{\Omega})$ are optimal. For uniformly convex domain, under the sharp conditions, global $C^{2,\alpha}$ estimates were obtained by Trudinger-Wang\cite{TW2008}. For smooth convex domain, under the sharp conditions, pointwise $C^{2,\alpha}$ estimates up to the boundary were proved by Savin \cite{Savin2013}. For general smooth bounded domain, Guan-Spruck\cite{GuanSpruck1993} first obtained the global $C^{2,\alpha}$ estimate by assuming the existence of a convex strict subsolution $\underline{u}\in C^2(\overline{\Omega})$. The strictness of subsolution $\underline u$ was removed by Guan\cite{Guan1998}.

However, the global $C^{2,\alpha}$ estimates for all the above works are under the assumptions that the domains $\Omega$ are smooth enough. Until recently, Le-Savin \cite{LeSavin2023} made a first attempt on the global $C^{2,\alpha}$ estimates in two dimensional polygons.
They proved that\\
\begin{theorem*} [Theorem 1.1,\cite{LeSavin2023}]  Let $\Omega$ be a bounded convex polygonal domain in $\mathbb R^2$. Let $u$ be a convex function that solves the Dirichlet problem for the Monge-Amp\`ere equation \eqref{intro-1}. Assume that for some $\beta\in(0,1)$,
\begin{equation*}
f\in C^\beta(\overline{\Omega}),\quad f>0,\quad \varphi\in C^{2,\beta}(\partial\Omega)
\end{equation*}
and there exists a globally $C^2$, convex, strictly sub-solution $\underline u\in C^2(\overline{\Omega})$ to \eqref{intro-1}. Then $u\in C^{2,\alpha}(\overline\Omega)$ for some $\alpha>0$. The constant $\alpha$ and the global $C^{2,\alpha}$ norm $\|u\|_{C^{2,\alpha}(\overline{\Omega})}$ depend on $\Omega$, $\beta$, $\min_{\overline\Omega}f$, $\|f\|_{C^\beta(\overline{\Omega})}$, $\|\varphi\|_{C^{2,\beta}(\partial\Omega)}$, $\|\underline u\|_{C^2(\overline\Omega)}$ and the differences $\det D^2{\underline u}-f$ at the vertices of $\Omega$.
\end{theorem*}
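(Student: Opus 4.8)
The plan is to split $\overline\Omega$ into fixed small neighborhoods of the finitely many vertices $V_1,\dots,V_N$ and the complementary region, on which $\partial\Omega$ is a union of straight segments — in particular $C^{2,\beta}$ with $\varphi\in C^{2,\beta}$ — and to treat the two regions separately. On the complement the comparison principle gives $\underline u\le u$ (since $\det D^2\underline u\ge f=\det D^2 u$ and $\underline u=u=\varphi$ on $\partial\Omega$), and the strict convexity of $\underline u$ forces $u$ to be strictly convex with boundary sections of uniformly bounded eccentricity; Savin's pointwise boundary $C^{2,\alpha}$ estimate on $C^{2,\beta}$ boundary portions together with the interior $C^{2,\alpha}$ estimate then give $u\in C^{2,\alpha}$ there with the stated dependence. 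Thus the whole problem reduces to a pointwise $C^{2,\alpha}$ estimate for $u$ at each vertex $V=V_k$.

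Fix such a $V$, translate it to the origin and subtract the supporting affine function of $u$ at $V$, so $u(0)=0$ and $0$ is a supporting plane; let $\mathcal C$ be the tangent sector at $V$, of opening $\theta_V\in(0,\pi)$. Because $\underline u\le u$, $\underline u\in C^2$ is strictly convex and $\det D^2\underline u(0)\ge f(0)>0$, we have $D^2\underline u(0)>0$ and the sections $S_h=\{u<h\}\cap\Omega$ shrink to $0$ with uniformly controlled geometry, the quantitative non-collapsing of $S_h$ at the corner being governed exactly by $\det D^2\underline u(0)-f(0)$. Setting $T=D^2\underline u(0)^{-1/2}$ and $u_h(y)=h^{-1}u(\sqrt h\,Ty)$, the $u_h$ are uniformly bounded convex solutions of $\det D^2u_h=f(\sqrt h\,Ty)\to f(0)$ on domains converging to $T^{-1}\mathcal C$, so a compactness argument (interior Pogorelov estimates plus the uniform section geometry up to the corner) yields $u_h\to U$ locally, where $U$ is a convex solution of the model cone problem
\[
\det D^2 U=f(0)\ \text{ in }\ \mathcal C,\qquad U=\varphi_V\ \text{ on }\ \partial\mathcal C,
\]
with $\varphi_V$ the $2$-jet of $\varphi$ along the two edges at $V$.

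The core of the argument is to identify $U$ and to make the convergence quantitative. The $2$-jet $Q_V$ of $\underline u$ at $V$ is a convex quadratic with $Q_V|_{\partial\mathcal C}=\varphi_V$ and $\det D^2Q_V\ge f(0)$; quadratics with these prescribed edge values form a one-parameter family $Q_s$, differing by multiples of the product $\ell_1\ell_2$ of the two linear forms cutting out $\partial\mathcal C$, along which $s\mapsto\det D^2Q_s$ is a downward parabola with maximum $\ge f(0)$. Hence there is (at least) one member $Q_V^{*}$ with $\det D^2Q_V^{*}=f(0)$ that is still convex, and it is the maximal convex solution of the model problem; a Liouville-type theorem in the cone — proved by the same convexity mechanism described next, and using once more that $\det D^2\underline u(0)-f(0)>0$ — shows the cone problem has no other convex solution of quadratic growth, so $U=Q_V^{*}$. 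Put $w=u-Q_V^{*}$. Then $w$ solves the uniformly elliptic equation $a^{ij}(x)\partial_{ij}w=f-f(0)=O(|x|^\beta)$ (with $a^{ij}$ the cofactors along the segment from $D^2Q_V^{*}$ to $D^2u$, which the normalization $T$ turns into the Laplacian on a convex sector of some opening $\widetilde\theta_V<\pi$), and $w=\varphi-\varphi_V=O(|x|^{2+\beta})$ on the edges. The key point is that \emph{any} homogeneous mode $r^\gamma\psi(\phi)$ with exponent $\gamma\in(1,2]$ in the expansion of $w$ is non-convex near the edges — its Hessian has negative determinant there of size $\sim r^{\gamma-2}$, which the bounded Hessian of $Q_V^{*}$ cannot compensate — contradicting convexity of $u=Q_V^{*}+w$; hence the first admissible correction has exponent strictly above $2$, namely $\min\{\pi/\widetilde\theta_V,\,3,\,2+\beta\}$ up to a threshold, and a dyadic iteration with boundary Schauder estimates for $a^{ij}\partial_{ij}$ on the sector gives $w=O(|x|^{2+\alpha})$. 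Thus $D^2u(V)=D^2Q_V^{*}$ and $D^2u$ is $C^\alpha$ up to $V$, which with the complement estimate and a covering argument yields $u\in C^{2,\alpha}(\overline\Omega)$.

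I expect the main obstacle to be the corner analysis in the second and third steps, that is, proving section estimates that remain \emph{stable} as $h\to 0$ at a genuine corner of a non-uniformly-convex domain — that the rescaled sections neither collapse nor open up, that the linearized coefficients $a^{ij}$ stay uniformly elliptic up to the corner, and that the only convex solutions of the model cone problem with quadratic growth are the finitely many quadratics realizing the edge $2$-jet. The strict subsolution is precisely what makes all of this quantitative, and the dependence of $\alpha$ and of the global norm on $\det D^2\underline u-f$ at the vertices enters through the non-degeneracy of $D^2\underline u(V)$ and the existence of the convex model quadratic $Q_V^{*}$.
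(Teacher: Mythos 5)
The statement you are proving is the boxed ``Theorem A,'' which the present paper does not prove: it is quoted verbatim from Le--Savin \cite{LeSavin2023} (their Theorem 1.1) and used as background. There is therefore no internal proof here to compare your sketch against, and what follows is only an assessment of the proposal on its own terms, with reference to how Le--Savin actually argue and to the pieces of their work that this paper records.

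Your overall strategy---reduce to a pointwise $C^{2,\alpha}$ estimate at each vertex, blow up, classify solutions of the model cone problem, then run a perturbative/iterative argument---is the right one and is essentially what Le--Savin do. However, several of the load-bearing claims are incorrect or incomplete. First, the Liouville-type classification in the cone is strictly richer than ``the unique convex solution of quadratic growth is $Q_V^*$'': as this paper records in Section 3 (quoting Theorem 1.3 of \cite{LeSavin2023}), the convex solutions of $\det D^2u=c$ in the half-plane sector with quadratic boundary data consist of \emph{two} quadratics $P_c^\pm$, a one-parameter scaling family $\lambda^2\overline P_c(x/\lambda)$ (all sharing the $2$-jet $D^2P_c^+$ at the vertex but none of them quadratic or smooth), and a singular family $\lambda^2\underline P_c(x/\lambda)$ with a conical point. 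Identifying the blow-up limit $U$ requires this classification together with the comparison $\underline u\le u$ (to exclude both the ``wrong'' quadratic $P_c^-$, which lies below the $2$-jet of $\underline u$, and the singular family, which would contradict strict convexity inherited from $\underline u$); your invocation of a convexity-of-modes mechanism for the Liouville step does not replace it. Second, your homogeneous-mode convexity argument is a genuinely nice observation and is close in spirit to the Section~2 expansion analysis of this very paper, but as written it presupposes two things that have to be \emph{proved}: that the correct quadratic to subtract is $Q_V^*$ (i.e., $P_c^+$ rather than $P_c^-$), and that the opening angle of the sector after the normalization $T$ is strictly less than $\pi/2$ so that the first harmonic exponent $\pi/\widetilde\theta_V$ exceeds $2$. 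The latter is precisely the distinction between $P_c^+$ and $P_c^-$ quantified in Section 3 via $\mu_c^+=\arccos\sqrt{1-c}/\pi\in(0,1/2)$ and the matrix $A_c^+$; if one normalized by $D^2P_c^-$ instead, the opening would exceed $\pi/2$ and the harmonic exponent would drop below $2$, so the mode argument by itself cannot select the right quadratic. Third, the quantitative ``non-collapsing of sections governed by $\det D^2\underline u(V)-f(V)$'' is the technical heart of the Le--Savin proof (it is how the dependence on the difference at vertices enters) and your sketch only gestures at it via a compactness statement without supplying the section-geometry estimate that makes the blow-up and the dyadic iteration uniform at the corner.
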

To the authors' knowledge, \cite{LeSavin2023} is the first result on the global  $C^{2,\alpha}$ estimates in the polygons for Monge-Amp{\`e}re equations. However, the higher regularity for the solutions of the Monge-Amp\`ere equation remains unknown in the polygons. There is another boundary value problem for Monge-Amp\`ere equation in polygonal domains.  Rubin \cite{Rubin2015} consider the following Guillemin boundary value problem
\begin{eqnarray}
&& \det D^2 u=\frac{\eta(x)}{L(x)},\quad \text{in}\quad P,\label{MAG01}\\
&& u(x)-\sum_{i=1}^N l_i(x)\ln l_i(x)\in C^\infty(\overline{P})\label{MAG02}
\end{eqnarray}
where $P\subset \mathbb R^2$ is the convex polygon
\begin{equation}\label{polygon1}
P=\bigcap_{i=1}^N \{l_i(x)>0\},
\end{equation}
where $l_i(x)$ is an affine function of $x$ for each $i$.
And
\begin{equation*}
L(x)=\prod_{i=1}^N l_i(x),\quad 0<\eta(x)\in C^\infty(\overline P).
\end{equation*}
Rubin \cite{Rubin2015} proved the existence of solutions for \eqref{MAG01} coupled with the following boundary condition
\begin{equation}\label{MAG03}
u(x)-\sum_{i=1}^N l_i(x)\ln l_i(x)\in C^\infty(\overline{P}\backslash\{p_1,\cdots,p_N\})\cap C^\alpha(\overline P).
\end{equation}
Here $p_1,\cdots,p_N$ are vertices of the polygon $P$. The global smoothness for $v(x)=u(x)-\sum_{i=1}^N l_i(x)\ln l_i(x)$ was obtained recently by Huang \cite{Huang2023} in $P\subset\mathbb R^2$ and Huang-Shen \cite{HuangShen2023} for higher dimensional cases.

\par This leads the authors consider the higher regularity of the solutions for \eqref{intro-1} in the polygon. However, the problem is different from the Guillemin boundary value problem. There are obstructions for the global smoothness of the solutions even after subtracting finite singular terms.  This seems to be a common phenomenon in the boundary expansion for singular elliptic equations. See \cite{HanJiang2023,HanLiLi2023,HanShen2020}. The first main result in this paper is the  higher global regularity of the solutions of \eqref{intro-1}.
For simplicity, we consider  the asymptotic behaviour near the conical point in a rather simple model problem.

Let $\mu\in(0,1)$, $V_{\mu}\subseteq  \mathbb{R}^2$ be an infinity cone with its vertex at the origin and an open angle $\mu\pi$.
 Using polar coordinates, we write
$$V_{\mu}=\{(r,\theta)| r\in (0,\infty),\, \theta\in (0,\mu\pi)\}.$$

Let $u$ be a solution of
\begin{align}\label{eq-MA-cone-eq}
\det D^2 u&=1
\quad\quad\quad\text{   in } V_{\mu}\bigcap B_1(0),\\
\label{eq-MA-cone-boundary}u&=\frac{|x|^2}{2}  \quad\quad\text{on }\partial V_{\mu} \bigcap B_1(0).
\end{align}
Set $u=\frac{|x|^2}{2}+v$, then
\begin{equation}\label{eq-v}\Delta v= -\det D^2 v:= F_0(v).\end{equation}
Assume
\begin{equation}\label{v-setup1}
v\in C^{2,\alpha}(\overline{V_{\mu}\bigcap B_1(0)}) \quad\text{for some }\alpha\in(0,1),
\end{equation}
and
\begin{equation}\label{v-setup2}
D^2v(0)=0.
\end{equation}
We will study asymptotic behaviors of $u$ or $v$ near the vertex of the cone $V_{\mu}$.

We introduce the {\it index set} $\mathcal I_1$, $\mathcal I_2$ and $\mathcal I$ by defining
\begin{align}\label{eq-def-index-1}
\mathcal I= \mathcal I_1 \bigcup\mathcal I_2,\quad \mathcal I_1= \left\{\frac{i}{\mu}\big|i\ge 1\right\},
\end{align}
\begin{align}\label{eq-def-index-2}\begin{split}
\mathcal I_2  &=\left\{\left(\frac 1\mu-2\right)j+\frac{i}{\mu}\big{|}i\ge 1,\  j\ge 1 \right\}, \quad\text{if  } \mu \in (0,1/2),\\
&=\left\{ 2\left(\frac 1\mu-1\right)j+\frac{i}{\mu}\big{|}   i\ge 2,\  j\ge 1 \right\}, \quad\text{if  } \mu \in [1/2,1),
\end{split}
\end{align}

We note that the set $\mathcal I_1\bigcap \mathcal I_2$ is not necessarily an empty set.

We arrange $\mathcal I$ as follows:
\begin{equation}\label{eq-arrangement}
\left( \frac{1}{\mu}=\right)\,\mu_1<\mu_2<\mu_3 <\cdots<\mu_{k}<\cdots ,\quad\quad\rightarrow \infty
\end{equation}
If $0<\mu<1/2$, $\mu_2= \frac{2}{\mu}-2$ and $\mu_3=\min\{\frac{2}{\mu}, \frac{3}{\mu}-4\}$.

One of our goal in this paper is to investigate the asymptotic behavior of solution $u$ of \eqref{eq-MA-cone-eq}-\eqref{eq-MA-cone-boundary}
near the origin.


\begin{theorem}\label{thrm-expansion-1}
Let $\mu\in(0,1/2)$ and
$u$ be an solution of
\eqref{eq-MA-cone-eq}-\eqref{eq-MA-cone-boundary} with \eqref{v-setup1} and \eqref{v-setup2}.
Then, for any integer $m\ge 1$,
\begin{equation}\label{expansion-1}
   \Big|u(x)-\frac{|x|^2}{2}-\sum_{i=1}^{m}\sum_{j=0}^{i-1}c_{i,j}(\theta)|x|^{\mu_i}(-\ln{|x|})^j\Big |
     \leq C_m\theta(\mu\pi-\theta)|x|^{\mu_{m+1}}(-\ln{|x|})^m,
\end{equation}
where $C_m$ is a positive constant depending on $m,\mu$
and each $c_{i,j}\in C^{\infty}[0,\mu\pi]$  with
$c_{i,j}(0)=c_{i,j}(\mu\pi)=0$.
\end{theorem}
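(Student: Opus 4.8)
The plan is to put $u=\tfrac{|x|^2}{2}+v$ and work throughout with the semilinear problem $\Delta v=-\det D^2v$ in $V_\mu\cap B_1$, $v=0$ on $\partial V_\mu\cap B_1$, $v\in C^{2,\alpha}$, $D^2v(0)=0$, exploiting that $\det D^2v$ is quadratic in $D^2v$ and that $D^2v(0)=0$ makes it a genuinely lower–order perturbation of the Laplacian near the vertex. The engine is the model computation for $\Delta$ on the cone in polar coordinates: separating variables, the homogeneous solutions vanishing on both rays $\{\theta=0\}$, $\{\theta=\mu\pi\}$ are exactly $r^{k/\mu}\sin(k\theta/\mu)$, $k\ge1$ — the exponents of $\mathcal I_1$ — and $\Delta\big(r^{\gamma}(-\ln r)^j\phi(\theta)\big)=r^{\gamma-2}(-\ln r)^j(\gamma^2\phi+\phi'')+(\text{lower log degree})$. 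The clean way to invert $\Delta$ on functions vanishing on the rays is to decompose $w=\sum_{k\ge1}b_k(r)\sin(k\theta/\mu)$, which turns $\Delta w=$ (forcing) into the Euler-type ODEs $b_k''+r^{-1}b_k'-(k/\mu)^2r^{-2}b_k=g_k(r)$; solving by variation of parameters against $r^{\pm k/\mu}$ gives, for $|g_k(r)|\le A_kr^{\gamma-2}(-\ln r)^j$, the bound $|b_k|\le CA_kr^{\gamma}(-\ln r)^j$ when $\gamma\notin\{k/\mu\}$ (plus a homogeneous $r^{k/\mu}$ term if $\gamma>k/\mu$) and $|b_k|\le CA_kr^{k/\mu}(-\ln r)^{j+1}$ when $\gamma=k/\mu$ — the resonant case, which is what creates the logarithms. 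Finally one records the stability of the index set under the nonlinearity: $\det D^2$ sends a pair of terms with exponents $\gamma,\gamma'$ to a forcing of exponent $\gamma+\gamma'-4$, hence (after inverting $\Delta$) to a new term of exponent $\gamma+\gamma'-2$, and one checks by hand that $\mathcal I=\mathcal I_1\cup\mathcal I_2$ is closed under $(\gamma,\gamma')\mapsto\gamma+\gamma'-2$ — e.g. $\tfrac k\mu+\tfrac{k'}\mu-2=(\tfrac1\mu-2)+\tfrac{k+k'-1}\mu\in\mathcal I_2$ — and here $\mu<1/2$, which makes the generator $\tfrac1\mu-2$ positive, is used crucially.

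For the base case I would first establish a preliminary decay estimate. Since $v\equiv0$ on the two rays through the origin, $v(0)=0$ and $Dv(0)=0$, so with $D^2v(0)=0$ the $C^{2,\alpha}$ bound gives $|v(x)|\le C|x|^{2+\alpha}$, hence $|D^2v|\le C|x|^\alpha$ and $|\det D^2v|\le C|x|^{2\alpha}$. Comparing with the barrier $Nr^{2\alpha+2}\psi_\ast(\theta)$, where $\psi_\ast>0$ solves $(2\alpha+2)^2\psi_\ast+\psi_\ast''=-1$ with zero endpoint data — legitimate because $2\alpha+2<1/\mu$ is below the first Dirichlet eigenvalue $1/\mu^2$ of $-\partial_{\theta\theta}$ on $[0,\mu\pi]$, unless $2+\alpha\ge1/\mu$ already, in which case nothing is needed at this stage — the maximum principle upgrades this to $|v(x)|\le C|x|^{2\alpha+2}\theta(\mu\pi-\theta)$. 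Iterating the improvement $p\mapsto 2p-2$ (each step using Schauder to get $|D^2v|\le C|x|^{p-2}$, and that $\det D^2v=O(|x|^{2p-4})$ is, for $\mu<1/2$, of strictly higher order than $|x|^{1/\mu-2}$) we reach, after finitely many steps and a final splitting of $v$ into its harmonic part plus a particular solution, $v=c_{1,0}(\theta)r^{1/\mu}+o(r^{1/\mu})$ with $c_{1,0}(\theta)=a_1\sin(\theta/\mu)$; this is the case $m=1$.

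The theorem then follows by induction on $m$. Suppose $P_m:=\sum_{i=1}^m\sum_{j=0}^{i-1}c_{i,j}(\theta)r^{\mu_i}(-\ln r)^j$ has been built, with each $c_{i,j}\in C^\infty[0,\mu\pi]$ vanishing at $0,\mu\pi$ and $|v-P_m|\le C_mr^{\mu_{m+1}}(-\ln r)^m\theta(\mu\pi-\theta)$. Plugging $P_m$ into the equation and using the bilinearity of the two–dimensional determinant, $\det D^2v-\det D^2P_m=Q\big(D^2v+D^2P_m,\,D^2(v-P_m)\big)$ with $|D^2v+D^2P_m|\le Cr^{1/\mu-2}\to0$, one sees that $w:=v-P_m$ solves $a^{ij}(x)D_{ij}w=g_m$ in $V_\mu\cap B_1$, $w=0$ on $\partial V_\mu\cap B_1$, where $a^{ij}=\delta^{ij}+O(r^{1/\mu-2})$ is a small perturbation of the identity (handled perturbatively) and the explicit defect $g_m=-\det D^2P_m-\Delta P_m$ is, by the construction of $P_m$ and the closure of $\mathcal I$ above, a finite sum of terms $r^{\gamma-2}(-\ln r)^l\phi(\theta)$ with $\gamma\in\mathcal I$, $\gamma\ge\mu_{m+1}$, $\phi$ smooth vanishing at the endpoints, and $|g_m|\le Cr^{\mu_{m+1}-2}(-\ln r)^{m}$. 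Applying the model estimate of the first step to the $\gamma=\mu_{m+1}$ part of $g_m$ produces the next batch $\sum_{j=0}^{m}c_{m+1,j}(\theta)r^{\mu_{m+1}}(-\ln r)^j$, whose $\theta$–profiles lie in $C^\infty[0,\mu\pi]$ and vanish at the endpoints (they are built from Dirichlet two–point problems); set $P_{m+1}=P_m+$ this batch. Applying the model estimate once more to $w$ minus the new batch — no element of $\mathcal I$ lies strictly between $\mu_{m+1}$ and $\mu_{m+2}$, so the relevant ODEs are non-resonant up to exponent $\mu_{m+2}$ — yields $|v-P_{m+1}|\le C_{m+1}r^{\mu_{m+2}}(-\ln r)^{m+1}\theta(\mu\pi-\theta)$, closing the induction. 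The weight $\theta(\mu\pi-\theta)$ comes from summing the Fourier series against the fast decay of the modes' coefficients (smoothness in $\theta$ of the $c_{i,j}$), and the $C_m$ stay controlled because the two–point ODEs are uniformly solvable, the gaps $\mu_{i+1}-\mu_i$ are bounded below, and $a^{ij}-\delta^{ij}$ is small near the vertex.

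I expect the main obstacle to be the bookkeeping inside the induction step, in two respects. First, the logarithmic degrees must be kept sharp: every passage through a resonance $\gamma\in\mathcal I_1$ raises the log degree by one, and one must verify that at $\mu_{m+1}$ the degree never exceeds $m$, i.e. that whenever $\mu_{m+1}=\mu_a+\mu_b-2$ the contributing indices satisfy $a+b\le m+1$ (with a further drop when $\mu_{m+1}\notin\mathcal I_1$); this is a combinatorial fact about $\mathcal I$ to be proved directly, using interlacing such as $\mu_m+(\tfrac1\mu-2)\in(\mu_m,2\mu_m-2)\cap\mathcal I$ for $m\ge2$. Second, one must handle the coincidences $\mu_i\in\mathcal I_1\cap\mathcal I_2$ flagged after \eqref{eq-def-index-2}: at such an exponent the defect receives contributions both from the homogeneous (``already in $v$'') part with exponents in $\mathcal I_1$ and from the quadratic nonlinearity, and one must apply the resonant/non–resonant dichotomy to the \emph{total} forcing and check it still yields $C^\infty[0,\mu\pi]$ profiles vanishing at the endpoints with the stated log degree. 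Once these two points are settled, the $\varepsilon$–free estimates (made possible by the discreteness of $\mathcal I$, so that no power of $r$ is ever sacrificed) and the uniformity of the $C_m$ are routine, if tedious, to carry through.
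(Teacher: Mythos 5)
Your proposal follows essentially the same route as the paper's proof of Theorem \ref{thm-v-1}: pass to the log-radial variable (your polar Euler ODEs for $b_k(r)$ are exactly the paper's constant-coefficient ODEs $L_k\psi=\widetilde f$ under $t=-\ln r$), separate into the modes $\sin(k\theta/\mu)$, bootstrap the initial $C^{2,\alpha}$ decay of $v$ past the first exponent $1/\mu$, and then induct using particular solutions in which log factors appear exactly at resonances $\gamma\in\mathcal I_1$, together with the closure of $\mathcal I$ under $(\gamma,\gamma')\mapsto\gamma+\gamma'-2$. The only implementation differences are minor (a barrier/maximum-principle argument in place of the paper's $L^2$-projection plus $W^{2,2}$ estimates in the base case, and the explicit linearized form $a^{ij}D_{ij}w=g_m$ rather than the paper's direct order count for $e^{-2t}\widetilde{F_0(v)}$); you also correctly identify, and indicate how to prove, the combinatorial bound on log degrees ($a+b\le m+1$ whenever $\mu_{m+1}=\mu_a+\mu_b-2$) that the paper's Step 3 asserts without justification.
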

In particular, $c_{1,0}(\theta)=c\sin\frac{\theta}{\mu}$ for some constant $c$.
\begin{remark}
 Theorem \ref{thrm-expansion-1} can be applied to Theorem A. This implies that  the solutions have obstructions for higher regularity in Theorem A even for smooth $f$ and boundary data $\varphi$.
\end{remark}

\begin{remark}
The method in Theorem \ref{thrm-expansion-1} can also be used to study the following problem:
\begin{align*}
\det D^2 u&=f
\quad\quad\quad\text{   in } V_{\mu}\bigcap B_1(0),\\
u&=\frac{|x|^2}{2} +g \quad\quad\text{on }\partial V_{\mu} \bigcap B_1(0).
\end{align*}
with $v= u-\frac{|x|^2}{2}$ satisfying $
v\in C^{2,\alpha}(\overline{V_{\mu}\bigcap B_1(0)}) \quad\text{for some }\alpha\in(0,1)$ and
$D^2v(0)=0$, where $f \in C^{\infty}(\overline{V_{\mu}\bigcap B_1(0)})$ is a positive function with $f(0)=1$ and  $g \in C^{\infty}(\overline{V_{\mu}\bigcap B_1(0)})$ with
$g=O(|x|^3)$.

However, in this case, the calculations will be much more complicated.
\end{remark}

\begin{theorem}\label{thrm-expansion-2}
Let $\mu\in [1/2, 1)$ and
$u$ be an solution of \eqref{eq-MA-cone-eq}-\eqref{eq-MA-cone-boundary}
with \eqref{v-setup1} and \eqref{v-setup2}.
Then, for any integer $m\ge 2$,
\begin{equation}\label{expansion-2}
   \Big|u(x)-\frac{|x|^2}{2}-\sum_{i=2}^{m} \sum_{j=0}^{i-2} c_{i,j}(\theta)|x|^{\mu_i}(-\ln{|x|})^j\Big |
     \leq C_m \theta(\mu\pi-\theta)|x|^{\mu_{m+1}}(-\ln{|x|})^m,
\end{equation}
where $C_m$ is a positive constant depending on $m,\mu$
and each $c_{i,j}\in C^{\infty}[0,\mu\pi]$  with
$c_{i,j}(0)=c_{i,j}(\mu\pi)=0$.
\end{theorem}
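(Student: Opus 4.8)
The plan is to prove Theorem \ref{thrm-expansion-2} by induction on $m$, following the same scheme as for Theorem \ref{thrm-expansion-1} and flagging only the modifications forced by the range $\mu\in[1/2,1)$. The key structural difference is that now $\mu_1=1/\mu\in(1,2]$, so the lowest homogeneous Dirichlet mode $r^{1/\mu}\sin(\theta/\mu)$ of $\Delta$ in $V_\mu$ cannot occur in the expansion of $v$: if it did with a nonzero coefficient, then for $\mu\in(1/2,1)$ the Hessian of $v$ would blow up like $|x|^{1/\mu-2}$ at the origin, contradicting \eqref{v-setup1}, while for $\mu=1/2$ one has $r^{1/\mu}\sin(\theta/\mu)=2xy$, with nonzero constant Hessian, which is ruled out by \eqref{v-setup2}. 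This is precisely why \eqref{expansion-2} begins at $i=2$, i.e. at $\mu_2=2/\mu>2$, and why in this range $\mathcal I_2$ is generated by $2(1/\mu-1)=(1/\mu+1/\mu)-2$ subject to $i\ge2$, rather than by $1/\mu-2$ as when $\mu\in(0,1/2)$.

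For the base case $m=2$ I would first record that, since $v$ vanishes on the two rays forming $\partial V_\mu$ and these span $\mathbb R^2$ (as $\mu<1$), one has $v(0)=0$ and $Dv(0)=0$; together with $D^2v(0)=0$ and \eqref{v-setup1}, Taylor's theorem gives $|v|\le C|x|^{2+\alpha}$, and rescaling $v$ on the dyadic half-annuli $V_\mu\cap(B_{2\rho}\setminus B_{\rho/2})$ and applying interior and boundary Schauder estimates to \eqref{eq-v} upgrades this to $|Dv|\le C|x|^{1+\alpha}$ and $|D^2v|\le C|x|^\alpha$ near $0$, hence $F_0(v)=-\det D^2v=O(|x|^{2\alpha})$. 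Solving \eqref{eq-v} as a Poisson equation in $V_\mu$ with zero Dirichlet data on $\partial V_\mu$, expanding the bounded solution into the modes $r^{k/\mu}\sin(k\theta/\mu)$ plus a particular solution, and discarding the $k=1$ mode as above, one improves the decay of $v$; iterating this bootstrap reaches $v=c_{2,0}(\theta)|x|^{2/\mu}+w_2$ with $c_{2,0}\in C^\infty[0,\mu\pi]$, $c_{2,0}(0)=c_{2,0}(\mu\pi)=0$, and the bound on $w_2$ asserted in \eqref{expansion-2}.

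For the inductive step, suppose $v=P_m+w_m$ with $P_m=\sum_{i=2}^m\sum_{j=0}^{i-2}c_{i,j}(\theta)|x|^{\mu_i}(-\ln|x|)^j$ of the asserted form and $|w_m|$ bounded as in \eqref{expansion-2}. Substituting into \eqref{eq-v} and using $\det D^2(A+B)=\det A+\det B+M(A,B)$ with $M$ the polarization of the cofactor form, $w_m$ solves
\[
\Delta w_m+M(D^2P_m,D^2w_m)+\det D^2w_m=-\Delta P_m-\det D^2P_m=:h_m.
\]
Using $\Delta\big(c(\theta)r^\lambda(-\ln r)^j\big)=r^{\lambda-2}\big[(c''+\lambda^2 c)(-\ln r)^j+(\text{lower powers of }-\ln r)\big]$ and expanding $\det D^2P_m$ into products of the building blocks, one finds $h_m=g(\theta)|x|^{\mu_{m+1}-2}(-\ln|x|)^{j_0}+(\text{strictly higher order})$ with $g\in C^\infty[0,\mu\pi]$ explicit and $j_0\le m-1$, where $j_0$ rises to $m$ only in the resonant case $\mu_{m+1}\in\mathcal I_1$. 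One then defines the new coefficients $c_{m+1,j}$ by solving, in decreasing order of $j$, the two-point problems
\[
c_{m+1,j}''+\mu_{m+1}^2\,c_{m+1,j}=g_j(\theta)+(\text{coupling to }c_{m+1,j+1}),\qquad c_{m+1,j}(0)=c_{m+1,j}(\mu\pi)=0,
\]
which are uniquely solvable precisely when $\mu_{m+1}\mu\pi\notin\pi\mathbb Z$, i.e. $\mu_{m+1}\notin\mathcal I_1$; in the resonant case the Fredholm solvability condition forces one extra factor $-\ln|x|$, and one checks that the top log-power at level $\mu_{m+1}$ stays $\le(m+1)-2$, so that $c_{m+1,j}$ ranges over $0\le j\le(m+1)-2$ as required. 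Setting $w_{m+1}:=v-P_{m+1}=w_m-\sum_j c_{m+1,j}(\theta)|x|^{\mu_{m+1}}(-\ln|x|)^j$, which again vanishes on $\partial V_\mu$, its equation has right-hand side of order $|x|^{\mu_{m+2}-2}(-\ln|x|)^{m+1}$, and the bound $|w_{m+1}|\le C_{m+1}\,\theta(\mu\pi-\theta)|x|^{\mu_{m+2}}(-\ln|x|)^{m+1}$ follows by a dyadic-scale argument: comparison on the annuli $B_{2\rho}\setminus B_{\rho/2}$ together with rescaled Schauder estimates shows that, once all expansion terms of order $\le\mu_m$ have been subtracted (in particular all modes $r^{k/\mu}\sin(k\theta/\mu)$ with $k/\mu\le\mu_m$), the remaining error decays at rate $\mu_{m+1}$; the boundary factor $\theta(\mu\pi-\theta)$ is then recovered from boundary (Hopf-type) Schauder estimates since $w_{m+1}$ vanishes on $\partial V_\mu$, and the relevant operator is a small perturbation of $\Delta$ near the origin because $D^2P_{m+1}=O(|x|^{\mu_2-2})\to0$. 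This closes the induction and yields \eqref{expansion-2}.

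The step I expect to be the main obstacle is the combinatorial bookkeeping inside the inductive step: one must verify that iterating ``$\Delta(\text{new term})\sim-\det D^2(\text{old terms})$'' never produces an exponent lying outside $\mathcal I$, nor a power of $-\ln|x|$ exceeding $i-2$ at level $\mu_i$. What is specific to $\mu\in[1/2,1)$ is that the two-fold product of the leading mode $r^{2/\mu}$ feeds a source at order $r^{2\mu_2-4}=r^{4/\mu-4}$ and hence a new term at $\mu_2+\mu_2-2=4/\mu-2\in\mathcal I_2$, while the absence of a genuine $r^{1/\mu}$ term means no source at $r^{\mu_1+\mu_2-4}=r^{3/\mu-4}$ ever appears — and $3/\mu-2\notin\mathcal I$ for $\mu\in[1/2,1)$, so this absence is exactly what makes $\mathcal I_2$ (with $i\ge2$ and generator $2(1/\mu-1)$) closed under the quadratic nonlinearity. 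The endpoint $\mu=1/2$, where $\mu_1=2$ is an integer and $r^{1/\mu}\sin(\theta/\mu)$ is a harmonic polynomial rather than a genuinely singular function, is absorbed by the observation above that \eqref{v-setup2} excludes it, and otherwise needs no change.
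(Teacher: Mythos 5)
Your proposal is correct and follows essentially the same route as the paper: the paper's proof of this theorem is a two‑line deferral to the proof of Theorem \ref{thrm-expansion-1} (in $(t,\theta)=(-\ln|x|,\theta)$ coordinates, with the spectral decomposition in $\theta$, the ODE Lemmas \ref{lemma-decay-sol-Ljw=f-ODE}--\ref{lemma-particular-solutions2}, and the bootstrap/induction in Steps 1--3), noting only that the coefficient of the $e^{-\mu_1 t}\sin(\mu_1\theta)$ mode must vanish. You reproduce exactly that structure, merely phrased in $x$-coordinates with dyadic annuli in place of the cylindrical variable $t$, and your identification of the reason the $i=1$ mode is absent is the same key point the paper isolates.

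One small remark in your favor: the paper attributes the vanishing of $c_1$ to \eqref{v-setup1} alone, but at the endpoint $\mu=1/2$ the mode $r^{1/\mu}\sin(\theta/\mu)=2x_1x_2$ is a smooth harmonic polynomial with nonzero constant Hessian, so it is really \eqref{v-setup2} that kills it there; your treatment, which splits the cases $\mu\in(1/2,1)$ (Hessian blows up, contradiction with \eqref{v-setup1}) and $\mu=1/2$ (excluded by \eqref{v-setup2}), is the more careful one. A minor inaccuracy in a side comment: you assert $3/\mu-2\notin\mathcal I$ for all $\mu\in[1/2,1)$, but at $\mu=1/2$ one has $3/\mu-2=4=2/\mu\in\mathcal I_1$; this is harmless for the argument since the $r^{1/\mu}$ mode is absent and so no source of that order is generated, but the blanket statement as written is false at the endpoint. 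The combinatorial bookkeeping you flag as the main remaining obstacle — that iterating $\Delta(\text{new})\sim-\det D^2(\text{old})$ never exits $\mathcal I$ and that log-powers stay $\le i-2$ at level $\mu_i$ — is indeed the content of Steps 2--3 of the paper's proof of Theorem \ref{thrm-expansion-1}, and your sketch of it is consistent with what the paper does.
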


We point out that the sequence $\{\mu_i\}$ in Theorem \ref{thrm-expansion-1} and Theorem \ref{thrm-expansion-2} is determined only by the open angle of the $V_{\mu}$, independent of the specific solution $u$ of
\eqref{eq-MA-cone-eq}-\eqref{eq-MA-cone-boundary}.

We will use the asymptotic expansion as a tool to study the regularity of solutions. Furthermore, in high-dimensional cases, we will construct examples where $C^2(\overline{\Omega})-$convex solutions to \eqref{intro-1} do not exist when \eqref{intro-1} satisfying both the compatibility conditions and the existence of a smooth subsolution.

\section{Asymptotic Expansions}\label{sec-expansion}

In order to prove Theorem \ref{thrm-expansion-1} and Theorem \ref{thrm-expansion-2}, we study the equation \eqref{eq-v} in cylindrical coordinates.
For any $x\in\mathbb R^2\setminus\{0\}$,
set $(t,\theta)\in \mathbb R\times \mathbb S^{1}$ by
\begin{equation}\label{eq-change-coordinates}
t=-\ln|x|,\quad \theta=\frac{x}{|x|}.\end{equation}

Denote $\partial^2_t+\partial^2_{\theta}$ by $\widetilde{\Delta}$. In the rest of this paper, we will use the superscript tilde to distinguish the representation of the same function in different coordinate systems. More precisely,
for the notation of a function in the $(t, \theta)$-coordinates, we will use its notation in the $x$-coordinates with the superscript tilde to represent it. For example, we will
denote the function $v(x_1,x_2) $ in the $(t, \theta)$-coordinates by $\widetilde{v} (t, \theta)$.
Then,
\begin{equation}\label{eq-in-t}\widetilde{\Delta}\widetilde{v}=e^{-2t}\widetilde{F_0(v)  }.\end{equation}
\eqref{v-setup1}-\eqref{v-setup2} imply, for any $t>10$,
\begin{equation}\label{v-est-t} \|\widetilde{v}\|_{C^{2, \alpha}([t-1,t+1]\times[0,\mu\pi])  }\leq Ce^{-(2+\alpha) t}.\end{equation}

In this setting, Theorem \ref{thrm-expansion-1} is equivalent to the following form.
\begin{theorem}\label{thm-v-1}
Let $\mu\in(0,1/2)$ and
$\widetilde{v}$ be a solution of
\eqref{eq-in-t} with \eqref{v-est-t}.
Then, for any integer $m\ge 1$,
\begin{equation}\label{add-1}
   \Big|\widetilde{v}-\sum_{i=1}^{m}\sum_{j=0}^{i-1}c_{i,j}(\theta)t^{j}e^{-\mu_it}\Big |
     \leq C_m \theta(\mu\pi-\theta)t^{m}e^{-\mu_{m+1}t}\quad\text{in } (100,\infty)\times[0,\mu\pi],
\end{equation}
where $C_m$ is a positive constant depending on $m,\mu$
and each $c_{i,j}\in C^{\infty}[0,\mu\pi]$  with
$c_{i,j}(0)=c_{i,j}(\mu\pi)=0$.
\end{theorem}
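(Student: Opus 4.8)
The plan is to pass to equation \eqref{eq-in-t} and treat it as a semilinear perturbation of the flat operator $\widetilde{\Delta}=\partial_t^2+\partial_\theta^2$ on a half-strip $(\tau,\infty)\times[0,\mu\pi]$ with homogeneous Dirichlet data on the two lateral sides $\{\theta=0\}$, $\{\theta=\mu\pi\}$. A direct computation of $\det D^2v$ in polar, then cylindrical, coordinates puts \eqref{eq-in-t} in the normal form
\[
\widetilde{\Delta}\widetilde v=-e^{2t}Q[\widetilde v],\qquad Q[\widetilde v]:=(\widetilde v_{tt}+\widetilde v_t)(\widetilde v_{\theta\theta}-\widetilde v_t)-(\widetilde v_{t\theta}+\widetilde v_\theta)^2,
\]
so that the nonlinearity is $e^{2t}$ times a quadratic form in the $2$-jet of $\widetilde v$. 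Since $\mu<1/2$ forces $\mu_1=1/\mu>2$, while \eqref{v-est-t} gives $\widetilde v=O(e^{-(2+\alpha)t})$ with $2+\alpha>2$, we obtain $e^{2t}Q[\widetilde v]=O(e^{-(2+2\alpha)t})$, which decays strictly faster than $\widetilde v$. This ``gain'' is the engine of the whole argument; throughout, interior and boundary Schauder estimates on the unit windows $[t-1,t+1]\times[0,\mu\pi]$ let us upgrade any sup-norm decay bound (for $\widetilde v$ or for a remainder) to matching decay for its first and second derivatives.

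The linear machinery is separation of variables. The eigenfunctions of $-\partial_\theta^2$ on $[0,\mu\pi]$ with Dirichlet data are $\sin(k\theta/\mu)$, $k\ge1$; the numbers $k/\mu$ are the indicial exponents, and they make up $\mathcal I_1$; the decaying $\widetilde{\Delta}$-harmonic functions vanishing on the sides are exactly $\sum_k a_k\sin(k\theta/\mu)e^{-(k/\mu)t}$. Expanding $\widetilde{\Delta}w=h$ in this basis reduces it to the scalar ODEs $w_k''-(k/\mu)^2w_k=h_k$, which I solve by variation of parameters using the decaying branch. This yields the workhorse estimate: if $|h|\le A\,t^pe^{-\gamma t}$ on the half-strip, $w$ vanishes on the two sides and decays as $t\to\infty$, then $w$ is a decaying $\widetilde{\Delta}$-harmonic function plus a particular solution bounded by $C\theta(\mu\pi-\theta)A\,t^pe^{-\gamma t}$ when $\gamma\notin\{k/\mu:k\ge1\}$, and by $C\theta(\mu\pi-\theta)A\,t^{p+1}e^{-\gamma t}$ when $\gamma=k_0/\mu$ (the extra power of $t$ appearing in the $k_0$-th mode only, by resonance of the corresponding ODE). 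The factor $\theta(\mu\pi-\theta)$ is free, since $w$ vanishes on the sides and the Schauder bound controls its gradient. I also record that when $h$ is an explicit finite sum $\sum_\ell p_\ell(\theta)t^{q_\ell}e^{-\rho_\ell t}$ with $p_\ell\in C^\infty$ and all $\rho_\ell\in\mathcal I$, the particular solution may be chosen of the same type, with the same exponents, with $C^\infty[0,\mu\pi]$ coefficients vanishing at $0$ and $\mu\pi$, and with $t$-degree raised by one exactly at those $\rho_\ell$ lying in $\mathcal I_1$.

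Starting from $\widetilde v=O(e^{-(2+\alpha)t})$, one round of the two preceding paragraphs gives $\widetilde v=O(te^{-(2+2\alpha)t})$; iterating, the decay exponent's gap above $2$ roughly doubles each round, so after finitely many rounds it passes $\mu_1$, and a last application isolates the unique $\widetilde{\Delta}$-harmonic piece at the first indicial exponent: $\widetilde v=c\sin(\theta/\mu)e^{-\mu_1t}+O(\theta(\mu\pi-\theta)e^{-(\mu_1+\delta_0)t})$ for a constant $c$ and a definite $\delta_0>0$. Thus $c_{1,0}=c\sin(\theta/\mu)$, which also gives the ``in particular'' statement after Theorem \ref{thrm-expansion-1}. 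The full expansion then follows by induction on $m$: given $c_{i,j}$ for $i\le m$ such that $S_m:=\sum_{i\le m}\sum_{j\le i-1}c_{i,j}(\theta)t^je^{-\mu_it}$ yields $R_m:=\widetilde v-S_m=O(\theta(\mu\pi-\theta)t^me^{-\mu_{m+1}t})$ (together with the analogues for $DR_m$, $D^2R_m$), write
\[
\widetilde{\Delta}R_m=\bigl(-e^{2t}Q[S_m]-\widetilde{\Delta}S_m\bigr)+\bigl(-e^{2t}(Q[S_m+R_m]-Q[S_m])\bigr).
\]
The second bracket is bilinear-plus-quadratic in $R_m$, so (using $\mu_1>2$ and that $\mathcal I$ is closed under $(\rho,\rho')\mapsto\rho+\rho'-2$) it decays at an exponent $\ge\mu_{m+1}+(\mu_1-2)\ge\mu_{m+2}$; the first bracket is an explicit \emph{finite} $\mathcal I$-exponent sum whose lowest-order part sits exactly at $\mu_{m+1}$, because the $c_{i,j}$, $i\le m$, were chosen to annihilate everything of order $\le\mu_m$. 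Solving the first bracket with the workhorse estimate, reading off the $e^{-\mu_{m+1}t}$ part of the solution as $\sum_{j}c_{m+1,j}(\theta)t^je^{-\mu_{m+1}t}$, folding the remaining (higher-order, explicit) terms and the homogeneous ambiguity at $\mu_{m+1}$ into $S_{m+1}$ — the homogeneous constant being pinned down by requiring that $\widetilde v-S_{m+1}$ carry no $e^{-\mu_{m+1}t}$-profile, which is possible since $R_m=O(t^me^{-\mu_{m+1}t})$ is already known — and re-estimating with the workhorse once more yields $R_{m+1}=O(\theta(\mu\pi-\theta)t^{m+1}e^{-\mu_{m+2}t})$ and its derivative analogues, closing the induction; $c_{i,j}\in C^\infty[0,\mu\pi]$ and $c_{i,j}(0)=c_{i,j}(\mu\pi)=0$ come from the linear machinery.

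The main obstacle is the algebraic bookkeeping that makes this scheme consistent: one must show the recursion produces only exponents in $\mathcal I$, that no exponent lies strictly between $\mu_m$ and $\mu_{m+1}$ after each step, and that the $t$-degree at exponent $\mu_i$ never exceeds $i-1$. For the degree count it is essential to carry the sharper inductive statement that the $t$-degree at $\mu_i$ is at most the largest $j$ with $\mu_i=(\tfrac1\mu-2)j+\tfrac{i'}{\mu}$ for some $i'\ge1$ — which is $\le i-1$ by counting the elements of $\mathcal I$ below $\mu_i$ — since the nonlinearity sends $\mu_a$- and $\mu_b$-terms to forcing at $\mu_a+\mu_b-2=(\tfrac1\mu-2)(j_a+j_b+1)+\tfrac{i_a+i_b-1}{\mu}$, so that the single extra power possibly gained at an $\mathcal I_1$-resonance is absorbed by this representation. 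All of this rests on the strict gain $\mu_1-2>0$ and the closure of $\mathcal I_2$, which is exactly where the hypothesis $\mu<1/2$ enters; the companion Theorem \ref{thrm-expansion-2} for $\mu\in[1/2,1)$ must instead use the weaker second-order gain $2(\mu_1-1)$, whence its different index set $\mathcal I_2$ and an expansion beginning at $i=2$.
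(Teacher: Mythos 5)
Your proposal follows the paper's own strategy almost exactly: rewrite the equation in the cylindrical coordinates $(t,\theta)=(-\ln|x|,x/|x|)$, separate variables with the Dirichlet eigenfunctions $\sin(k\theta/\mu)$, use an ODE lemma plus Schauder estimates on unit windows as a ``workhorse'' for the linear problem, bootstrap from the initial $O(e^{-(2+\alpha)t})$ decay up past the first indicial exponent $\mu_1=1/\mu$ to produce $c_{1,0}(\theta)=c\sin(\theta/\mu)$, and then run an induction where the nonlinear source at each level is an explicit sum of terms with exponents $\mu_a+\mu_b-2$, using that $\mathcal I$ is closed under this operation (crucially relying on $\mu_1-2>0$ when $\mu<1/2$). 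The subdivision of the source for $\widetilde\Delta R_m$ into the explicit bracket $-e^{2t}Q[S_m]-\widetilde\Delta S_m$ and the bilinear-plus-quadratic bracket in $R_m$ is the same split the paper makes implicitly in \eqref{eqn-r-m}, and the resonance handling via Lemma \ref{lemma-particular-solutions2} when $\mu_{m+1}\in\mathcal I_1\cap\mathcal I_2$ matches Step 3 of the paper.

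Where your write-up is genuinely sharper is the $t$-degree bookkeeping. The paper asserts in its Step 3 that the source at $\gamma_{i_0+1}=\mu_{m+1}$ is $\sum_{i=0}^{m-1}\widetilde C_i(\theta)t^ie^{-\mu_{m+1}t}$, i.e.\ that the $t$-power is at most $m-1$, after having only established the crude bound $t^{2m-2}$; this claim is not justified there. Your sharper invariant — that the $t$-degree attached to $\mu_i$ is at most the maximal $j$ with $\mu_i=(\tfrac1\mu-2)j+\tfrac{i'}{\mu}$, $i'\ge1$, and that this maximal $j$ is $\le i-1$ by counting the elements $(\tfrac1\mu-2)\ell+\tfrac{i'}{\mu}$, $0\le\ell<j$, of $\mathcal I$ lying strictly below $\mu_i$ — closes this gap cleanly: the quadratic interaction of a $\mu_a$-term (degree $\le j_a$) and a $\mu_b$-term (degree $\le j_b$) lands at $\mu_a+\mu_b-2=(\tfrac1\mu-2)(j_a+j_b+1)+\tfrac{i'_a+i'_b-1}{\mu}$, so even the extra power of $t$ gained at an $\mathcal I_1$-resonance keeps the degree within the budget $j_a+j_b+1\le j_{m+1}$. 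Two minor slips worth flagging for a final write-up: the intermediate bootstrap step ``$\widetilde v=O(te^{-(2+2\alpha)t})$'' is unnecessarily pessimistic (the extra $t$ only appears when $2+2\alpha$ hits an $\mathcal I_1$-exponent exactly; the paper handles this by a small $\varepsilon$-shift so no $t$-powers accumulate below $\mu_1$), and ``the lowest-order part sits exactly at $\mu_{m+1}$'' should read ``at or above $\mu_{m+1}$'' since the coefficient at $\mu_{m+1}$ may vanish. Neither affects the validity of the argument.
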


As a preparation,
we first discuss the linear equation $\widetilde{\Delta}\widetilde{w}=\widetilde{f}$ and then study
the nonlinear equation \eqref{eq-in-t}.

For $i\ge 1$, set $\phi_i=\sqrt{\frac{2}{\mu\pi}}\sin\frac{i\theta}{\mu}$. Then, $\{\phi_i\}$ is an orthonormal basis of $L^2[0,\mu\pi]$ and
$\frac{d^2\phi_i}{d \theta^2}=-\big(\frac{i}{\mu}\big)^2\phi_i$. For each $i\ge 1$ and any $\psi=\psi(t)\in C^2(\mathbb R)$, write
\begin{equation}\label{eq-U2-01a} \widetilde{\Delta}(\psi \phi_i)=(L_i\psi)\phi_i,\end{equation}
where $L_i$ is given by
\begin{equation}\label{eq-U2-01b}
L_i\psi=\psi_{tt}-\left(\frac{i}{\mu}\right)^2\psi.\end{equation}
Then,   kernel $\mathrm{Ker}(L_i)=\text{span}\left\{e^{-\frac i\mu t},e^{\frac i\mu t}\right\}$.

For each fixed $i\ge 1$, consider the linear equation
\begin{equation}\label{eq-linearization-ODE-i}
L_i\psi=\widetilde{f}\quad\text{on }(T,+\infty).
\end{equation}
For the solutions of \eqref{eq-linearization-ODE-i} which converge to zero as $t\to\infty$, we have the following lemma.

\begin{lemma}\label{lemma-decay-sol-Ljw=f-ODE}
Let
$\gamma>0$ be a constant,  $m\geq 0$ be an integer,
and $\widetilde{f}$ be a smooth function on $(T,+\infty)$, satisfying,
for any $t>T$,
$$|\widetilde{f}(t)|\leq C t^me^{-\gamma t}.$$
Let $\psi$ be a smooth solution of \eqref{eq-linearization-ODE-i} on $(T,+\infty)$  such that
$\psi(t)\to 0$ as $t\to +\infty$.
Then,
for any $t>T$,
\begin{equation*}
|\psi(t)|\leq\left\{
\begin{aligned}
&Ct^me^{-\gamma t}\hspace{1cm}\text{if } \gamma< \frac{i}{\mu},\\
&Ct^{m+1}e^{-\gamma t}\hspace{0.6cm}\text{if } \gamma= \frac{i}{\mu},
\end{aligned}
\right.
\end{equation*}
and there exists a constant $c$ such that
\begin{equation*}
|\psi(t)-ce^{-\frac{i}{\mu}t}|\leq
Ct^me^{-\gamma t}\hspace{0.6cm}\text{if } \gamma> \frac{i}{\mu}.
\end{equation*}
\end{lemma}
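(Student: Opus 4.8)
The plan is to solve the ODE $L_i\psi = \widetilde f$ explicitly by variation of parameters, using the two fundamental solutions $e^{-\frac{i}{\mu}t}$ and $e^{\frac{i}{\mu}t}$ of $L_i$, and then choose the free constants so that the resulting particular solution decays; finally I match this decaying solution against $\psi$ using the kernel. Write $\nu = \frac{i}{\mu}$ for brevity. A particular solution is
\[
\psi_p(t) = \frac{1}{2\nu}\left( e^{\nu t}\int_t^{\infty} e^{-\nu s}\widetilde f(s)\,ds \;-\; e^{-\nu t}\int_{T}^{t} e^{\nu s}\widetilde f(s)\,ds \right),
\]
where in the first integral I have integrated "from $\infty$" precisely so that the growing exponential $e^{\nu t}$ is multiplied by a tail that is small; this is legitimate because $|\widetilde f(s)| \le C s^m e^{-\gamma s}$ makes $\int^\infty e^{-\nu s}\widetilde f(s)\,ds$ convergent. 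Any other solution of \eqref{eq-linearization-ODE-i} differs from $\psi_p$ by an element of $\mathrm{Ker}(L_i) = \mathrm{span}\{e^{-\nu t}, e^{\nu t}\}$.

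The core of the argument is the elementary estimate for incomplete Gamma-type integrals: for $\gamma \ne \nu$ one has $\int_t^\infty s^m e^{-(\gamma-\nu)s}\,ds \le C t^m e^{-(\gamma-\nu)t}$ when $\gamma > \nu$, and $\int_T^t s^m e^{(\nu-\gamma)s}\,ds \le C t^m e^{(\nu-\gamma)t} + C$ in general, with an extra factor of $t$ appearing in the borderline case $\gamma = \nu$ where the exponent vanishes and the integral of $s^m$ produces $t^{m+1}$. Plugging these into the two terms of $\psi_p$:
\begin{itemize}
\item If $\gamma < \nu$: the first term is $\lesssim e^{\nu t}\cdot t^m e^{-\gamma t} = t^m e^{(\nu-\gamma)t}$, which grows, but it is controlled by $e^{-\nu t}$ times the convergent-looking combination; more carefully, the second term $e^{-\nu t}\int_T^t e^{\nu s}\widetilde f\,ds$ is $\lesssim e^{-\nu t}(t^m e^{(\nu-\gamma)t} + 1) \lesssim t^m e^{-\gamma t} + e^{-\nu t}$, and the $e^{-\nu t}$ piece (together with a similar leftover from the first term) is exactly an element of $\mathrm{Ker}(L_i)$; absorbing it gives, after correcting by the kernel, $|\psi(t) - c e^{-\nu t}| \le C t^m e^{-\gamma t}$.
\item If $\gamma = \nu$: the borderline integral yields $t^{m+1}$, giving $|\psi(t)| \le C t^{m+1} e^{-\gamma t}$.
\item If $\gamma > \nu$: both terms are genuinely $O(t^m e^{-\gamma t})$, so $|\psi_p(t)| \le C t^m e^{-\gamma t}$, and the only way to keep $\psi \to 0$ is to add a multiple of $e^{-\nu t}$ (the $e^{\nu t}$ coefficient must be zero), which in the regime $\gamma > \nu$ dominates $t^m e^{-\gamma t}$, so one states the conclusion as $|\psi(t) - c e^{-\nu t}| \le C t^m e^{-\gamma t}$.
\end{itemize}

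To finish I use the hypothesis $\psi(t) \to 0$: writing $\psi = \psi_p + a e^{-\nu t} + b e^{\nu t}$, the bound just derived for $\psi_p$ together with $\psi \to 0$ forces $b = 0$ in every case (otherwise $\psi$ would blow up like $e^{\nu t}$), and then setting $c := a$ (plus the kernel leftover already extracted from $\psi_p$) gives the stated estimates. The main obstacle—really the only subtlety—is bookkeeping the borderline case $\gamma = \nu$ and, in the case $\gamma < \nu$, carefully separating the genuinely-decaying part $t^m e^{-\gamma t}$ from the $e^{-\nu t}$ remainder so that the latter is recognized as a kernel element and folded into the constant $c$; everything else is a routine application of the two integral estimates. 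One should also note that the constant $C$ in the conclusion depends on the constant in the hypothesis on $\widetilde f$, on $m$, on $\gamma$, and on $\nu = i/\mu$ (through $T$ and the lower limits), which is consistent with the way the lemma will be invoked.
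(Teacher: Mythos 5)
Your approach is the same as the paper's: construct a particular solution by variation of parameters (integrating from $\infty$ on the exponentially growing mode), estimate it, and kill the $e^{\nu t}$ mode using the hypothesis $\psi\to 0$. However, there are two concrete errors in the execution, the second of which is a genuine gap in the case analysis.

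\emph{Sign error.} Writing $\nu=i/\mu$, the correct variation-of-parameters particular solution for $\psi''-\nu^2\psi=\widetilde f$ (with the growing mode integrated from infinity) is
\[
\psi_p(t)=-\frac{1}{2\nu}\,e^{-\nu t}\int_T^t e^{\nu s}\widetilde f(s)\,ds \;-\;\frac{1}{2\nu}\,e^{\nu t}\int_t^\infty e^{-\nu s}\widetilde f(s)\,ds,
\]
with a minus sign on \emph{both} terms. Your formula $\psi_p=\frac{1}{2\nu}\bigl(e^{\nu t}\int_t^\infty e^{-\nu s}\widetilde f\,ds-e^{-\nu t}\int_T^t e^{\nu s}\widetilde f\,ds\bigr)$ does not solve the equation: for instance with $\widetilde f=e^{-\gamma t}$ it yields $\psi_p''-\nu^2\psi_p=\tfrac{\gamma}{\nu}e^{-\gamma t}$, not $e^{-\gamma t}$.

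\emph{Crossed cases.} More seriously, the two cases $\gamma<\nu$ and $\gamma>\nu$ are swapped in your discussion. When $\gamma<\nu$, the term $e^{-\nu t}\int_T^t e^{\nu s}\widetilde f\,ds$ is already $O(t^m e^{-\gamma t})$ because $\int_T^t s^m e^{(\nu-\gamma)s}\,ds\lesssim t^m e^{(\nu-\gamma)t}$; there is no $e^{-\nu t}$ residue to absorb, and the stated conclusion in the lemma for $\gamma<\nu$ is $|\psi|\le Ct^m e^{-\gamma t}$, not $|\psi-ce^{-\nu t}|\le Ct^m e^{-\gamma t}$ as you wrote. Conversely, when $\gamma>\nu$ the integral $\int_T^t e^{\nu s}\widetilde f\,ds$ converges to a generically nonzero constant $K$ as $t\to\infty$, so $e^{-\nu t}\int_T^t e^{\nu s}\widetilde f\,ds = Ke^{-\nu t}-e^{-\nu t}\int_t^\infty e^{\nu s}\widetilde f\,ds$ carries an $e^{-\nu t}$ piece that dominates $t^m e^{-\gamma t}$; your assertion that for $\gamma>\nu$ ``both terms are genuinely $O(t^m e^{-\gamma t})$'' is therefore false as stated. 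The paper sidesteps this bookkeeping entirely by using a \emph{different} particular solution when $\gamma>\nu$ (replacing $\int_T^t$ by $\int_t^\infty$ in the $e^{-\nu t}$-factor as well), so that $\psi_*$ itself satisfies the clean bound with no kernel residue. Your alternative---one formula for all $\gamma$, with the $e^{-\nu t}$ residue folded into the constant $c$---is perfectly workable, but that extraction belongs to the case $\gamma>\nu$, and the intermediate estimate on $\psi_p$ must be stated as $|\psi_p-ce^{-\nu t}|\le Ct^m e^{-\gamma t}$ rather than $|\psi_p|\le Ct^m e^{-\gamma t}$.
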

\begin{proof}
The proof is essentially the same as Lemma 7.1 in \cite{HJS}.
Set
\begin{align*}
\psi_*(t) &=-\frac{\mu e^{- \frac{i}{\mu}t}}{2i}\int_{T}^te^{ \frac{i}{\mu} s}\widetilde{f}(s)ds-
\frac{\mu e^{ \frac{i}{\mu}t}}{2i}\int_{t}^\infty e^{- \frac{i}{\mu}s}\widetilde{f}(s)ds, \quad\text{if  }  \gamma\le\frac{i}{\mu},\\
&=\frac{\mu e^{-\frac{i}{\mu} t}}{2i}\int_{t}^\infty e^{ \frac{i}{\mu}s}\widetilde{f}(s)ds-
\frac{\mu e^{ \frac{i}{\mu}t}}{2i}\int_{t}^\infty e^{- \frac{i}{\mu} s}\widetilde{f}(s)ds , \quad\text{if  } \gamma>\frac{i}{\mu}.
\end{align*}
Then, $\psi_*(t)$ is a particular solution satisfying the desired estimate.

Note that any solution $\psi$ of \eqref{eq-linearization-ODE-i} is given by
$$\psi=c_1e^{-\frac{i}{\mu}t}+c_2e^{\frac{i}{\mu}t}+\psi_*,$$
for some constants $c_1$ and $c_2$.
The condition $\psi(t)\to0$ as $t\to +\infty$ implies $c_2=0$.
Therefore, we have the desired estimate.
\end{proof}

Now, we study the linear equation
\begin{equation}\label{eq-linearization-PDE-spherical}    \widetilde{\Delta}\widetilde{w}=\widetilde{f}\quad\text{in }
(T,+\infty)\times [0,\mu\pi].\end{equation}
We discuss asymptotic expansions of solutions of \eqref{eq-linearization-PDE-spherical}
as $t\to \infty$.

\begin{lemma}\label{lemma-nonhomogeneous-linearized-eq}
Let
$\widetilde{f}$ be a smooth function in $(T,+\infty)\times[0,\mu\pi]$, satisfying,
for some positive constants $\gamma$, $A$, and some integer $m\geq 0$,
\begin{align}\label{eq-assumption-bound-f}
|\widetilde{f}|\le At^me^{-\gamma t}\quad\text{in } (T,+\infty)\times [0,\mu\pi],\end{align}
Let $\widetilde{w}\in C^\infty((T,+\infty)\times[0,\mu \pi])$
be a solution of \eqref{eq-linearization-PDE-spherical}
in $(T,+\infty)\times[0,\mu\pi]$ such that
$\widetilde{w}(t,0)=\widetilde{w}(t,\mu \pi)=0$ for $t\in(T,+\infty)$,
$\widetilde{w}(t,\theta)\to 0$ as $t\to +\infty$ uniformly for $\theta\in[0,\mu\pi]$.

$\mathrm{(i)}$ If $\gamma\le \frac{1}{\mu}$, then,
for any $(t,\theta)\in (T+1,+\infty)\times[0,\mu\pi]$,
$$|\widetilde{w}|\le \begin{cases} Ct^me^{-\gamma t} &\text{if }\gamma<\frac{1}{\mu},\\
Ct^{m+1}e^{-\gamma t}&\text{if }\gamma=\frac{1}{\mu}.\end{cases}$$

$\mathrm{(ii)}$ If $\frac{l}{\mu}<\gamma\le \frac{l+1}{\mu}$ for some positive integer $l$,
then, 
for any $(t,\theta)\in (T+1,+\infty)\times[0,\mu\pi]$,
$$\Big|\widetilde{w}-\sum_{i=1}^lc_ie^{-\frac{i}{\mu}t}\phi_i\Big|\le
\begin{cases} Ct^me^{-\gamma t}&\text{if }\frac{l}{\mu}<\gamma<\frac{l+1}{\mu},\\
Ct^{m+1}e^{-\gamma t}&\text{if }\gamma=\frac{l+1}{\mu},\end{cases}$$
for some constant $c_i$, for
$i=1, \cdots, l$.
\end{lemma}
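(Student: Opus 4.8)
The plan is to expand $\widetilde{w}$ in the orthonormal basis $\{\phi_i\}$ and reduce the PDE to the family of decoupled ODEs $L_i\psi_i=\widetilde{f}_i$, where $\psi_i(t)=\int_0^{\mu\pi}\widetilde{w}(t,\theta)\phi_i(\theta)\,d\theta$ and $\widetilde{f}_i(t)=\int_0^{\mu\pi}\widetilde{f}(t,\theta)\phi_i(\theta)\,d\theta$. The boundary conditions $\widetilde{w}(t,0)=\widetilde{w}(t,\mu\pi)=0$ and the fact that $\phi_i$ vanishes at the endpoints make the integration by parts legitimate, so that $\int \widetilde{\Delta}\widetilde{w}\,\phi_i\,d\theta=\psi_i''-(i/\mu)^2\psi_i=L_i\psi_i$. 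From \eqref{eq-assumption-bound-f} and Cauchy-Schwarz we get $|\widetilde{f}_i(t)|\le C t^m e^{-\gamma t}$ with a constant independent of $i$ (in fact with a decaying-in-$i$ bound if we use the smoothness of $\widetilde f$ in $\theta$, but the uniform bound already suffices once combined with the spectral gap). Since $\widetilde{w}(t,\cdot)\to0$ uniformly, each $\psi_i(t)\to0$, so Lemma \ref{lemma-decay-sol-Ljw=f-ODE} applies to every $i$.

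Next I would apply Lemma \ref{lemma-decay-sol-Ljw=f-ODE} mode by mode. For $i$ with $i/\mu>\gamma$ (equivalently $i>l$ in case (ii), all $i\ge1$ in case (i) when $\gamma<1/\mu$), the lemma gives $|\psi_i(t)|\le C t^m e^{-\gamma t}$; for $i$ with $i/\mu=\gamma$ it gives $|\psi_i(t)|\le C t^{m+1}e^{-\gamma t}$; and for $i$ with $i/\mu<\gamma$ (the finitely many indices $i=1,\dots,l$ in case (ii)) it gives $|\psi_i(t)-c_i e^{-it/\mu}|\le C t^m e^{-\gamma t}$ for a constant $c_i$. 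I then set $\widetilde{w}=\sum_i\psi_i\phi_i$ and, in case (ii), subtract off $\sum_{i=1}^l c_i e^{-it/\mu}\phi_i$; the remainder is $\sum_{i=1}^l(\psi_i-c_ie^{-it/\mu})\phi_i+\sum_{i>l}\psi_i\phi_i$, and I estimate its sup norm.

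The main obstacle is summing the mode-by-mode estimates into a pointwise $L^\infty$ bound on $(T+1,\infty)\times[0,\mu\pi]$: the per-mode bound $Ct^me^{-\gamma t}$ does not obviously sum in $i$ since $\sum_i\|\phi_i\|_\infty$ diverges. The standard fix — and the reason the statement is phrased on $(T+1,\infty)$ rather than $(T,\infty)$ — is to exploit the spectral gap: for modes with $i/\mu>\gamma$, a sharper application of Lemma \ref{lemma-decay-sol-Ljw=f-ODE}'s Green's-function representation shows $\|\psi_i\|_{L^\infty(t-1/2,t+1/2)}\le \frac{C}{(i/\mu)^2-\gamma^2}\,\|\widetilde f_i\|_{L^\infty(T,\infty;\,t^me^{-\gamma t})}$ with the $(i/\mu)^2$ in the denominator, so $\sum_i\frac{\|\phi_i\|_\infty}{(i/\mu)^2}<\infty$. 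Equivalently one can run local Schauder/energy estimates for $\widetilde{\Delta}$ on unit cylinders $[t-1,t+1]\times[0,\mu\pi]$: the $H^1$-in-$\theta$ bound $\sum_i (i/\mu)^2\psi_i(t)^2\le C t^{2m}e^{-2\gamma t}$ together with a one-dimensional Sobolev embedding in $\theta$ yields the uniform bound, and elliptic regularity up to the corner-free lateral boundary upgrades $L^2$-in-$t$ averages to pointwise values, costing the shift from $T$ to $T+1$. Once the uniform bound on the remainder is in hand, the two cases of the conclusion follow immediately from the three cases of the ODE lemma, completing the proof.
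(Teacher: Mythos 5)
Your overall strategy---Fourier-decompose in $\theta$, reduce to the ODEs $L_i\psi_i=\widetilde f_i$, invoke Lemma \ref{lemma-decay-sol-Ljw=f-ODE} mode by mode, then reassemble---is exactly the skeleton of the paper's argument. Your second suggested fix (control the high-mode remainder in $L^2[0,\mu\pi]$ and then upgrade to $L^\infty$ by local $W^{2,2}$ estimates on strips $(t-1,t+1)\times[0,\mu\pi]$, which is what costs the shift from $T$ to $T+1$) is precisely the route the paper takes: it sets $\widehat w=\widetilde w-\sum_{i\le l}\widetilde w_i\phi_i$, cites \cite{HJS} for the bound $\|\widehat w(t,\cdot)\|_{L^2[0,\mu\pi]}\le Ct^{m_*}e^{-\gamma t}$, and then applies $W^{2,2}$ estimates plus Sobolev embedding.

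However, the specific mode-decay estimate you lean on, $\|\psi_i\|_{L^\infty(t-1/2,t+1/2)}\le\frac{C}{(i/\mu)^2-\gamma^2}\|\widetilde f_i\|$, does not hold as stated. The Green's-function formula inside Lemma \ref{lemma-decay-sol-Ljw=f-ODE} produces a \emph{particular} solution $\psi_{*,i}$ with that $1/i^2$ gain, but the actual $\psi_i$ differs from it by a homogeneous term $c_{1,i}e^{-it/\mu}$: the condition $\psi_i\to0$ only kills the growing mode $e^{it/\mu}$, and $c_{1,i}$ is in no way controlled by $\widetilde f_i$. For a single mode this is harmless, but your summation step $\sum_i\|\phi_i\|_\infty/(i/\mu)^2<\infty$ (and likewise the asserted $H^1$-in-$\theta$ bound $\sum_i(i/\mu)^2\psi_i(t)^2\le Ct^{2m}e^{-2\gamma t}$) silently assumes the full $\psi_i$ enjoys the $1/i^2$ decay. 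To close the gap you must anchor the $c_{1,i}$ at a fixed slice: Parseval gives $\sum_i\psi_i(t_0)^2=\|\widetilde w(t_0,\cdot)\|_{L^2}^2<\infty$ for any $t_0>T$, and then for $t>t_0+1$ the geometric factor $e^{-i(t-t_0)/\mu}$ together with $(l+1)/\mu\ge\gamma$ keeps $\sum_{i>l}|c_{1,i}|e^{-it/\mu}$ inside the target rate $e^{-\gamma t}$. This extra step is where the actual work of the lemma lies; it is exactly what the paper's $L^2[0,\mu\pi]$ bound on $\widehat w$ (obtained without per-mode sup bounds) is designed to handle. With that repair your proof is sound; without it, the ``uniform bound'' at the heart of your summation argument is unjustified.
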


\begin{proof} The proof
is similar as that of Lemma 7.2 in \cite{HJS}.
 Here, we only point out the key points.
 We consider (ii) only.

Set
$$\widetilde{w_{i}}(t)=\int_{[0,\mu\pi]}\widetilde{w}(t, \theta)\phi_{i}(\theta)d\theta,\quad
\widetilde{f_{i}}(t)=\int_{[0,\mu\pi] }\widetilde{f}(t, \theta)\phi_{i}(\theta)d\theta.$$
Then,
$L_i \widetilde{w_{i}}=\widetilde{f_{i}}\quad\text{on }(T,+\infty).$

By Lemma \ref{lemma-decay-sol-Ljw=f-ODE}, there exists a constant $c_{i}$ for $i=1, \cdots, l$ such that,
for any $t>T$,
 \begin{equation*}
|\widetilde{w_{i}}(t)-c_{i}e^{-\frac{i}{\mu} t}|\leq C_it^me^{-\gamma t}.
\end{equation*}

Set
$$\widehat w(t,\theta)=\widetilde{w}(t,\theta)
-\sum_{i=1}^{l}\widetilde{w_{i}}(t)\phi_{i}(\theta),\quad
\widehat f(t,\theta)=\widetilde{f}(t,\theta)
-\sum_{i=1}^{l}\widetilde{f_{i}}(t)\phi_{i}(\theta).$$
Then,  $\widetilde{w_{i}}(t)\to 0$ as $t\to +\infty$ and  $\widetilde{\Delta}\widehat{w}=\widehat{f}$.
Arguing as in \cite{HJS}, one gets

\begin{equation*}\|\widehat w(t,\cdot)\|_{L^2[0,\mu\pi]}\le Ct^{m_*}e^{-\gamma t},
\end{equation*}
where $m_*=m$ if $\frac{l}{\mu}<\gamma<\frac{l+1}{\mu}$ and $m_*=m+1$ if $\gamma=\frac{l+1}{\mu}$.

Applying the $W^{2,2}$-estimates, one gets
 for any $t>T+1$,
\begin{equation*}
\sup_{\{t\}\times[0,\mu\pi]} |\widehat w|\le
C\big\{\|\widehat w\|_{L^2((t-1,t+1)\times [0,\mu\pi])}+\|\widehat f\|_{L^{\infty}((t-1,t+1)\times[0,\mu\pi])}\big\}
\le Ct^{m_*}e^{-\gamma t}.\end{equation*}

Then, write
\begin{align*}
\tilde w(t,\theta)-\sum_{i=1}^lc_{i}e^{-\frac i\mu t}\phi_{i}(\theta)
=\sum_{i=1}^l\big(\tilde w_{i}(t)-c_{i}e^{-\frac i\mu t}\big)\phi_{i}(\theta)
+\widehat w(t,\theta).
\end{align*}
We finish the proof.

\end{proof}

\begin{lemma}\label{lemma-particular-solutions1}
Let $\gamma\notin\mathcal I_1 $ be a positive constant,
and $\widetilde{h}$ $\in C[0,\mu \pi]$.
Then, there exist $\widetilde{w} \in  C^2[0,\mu \pi]$ with $\widetilde{w}(0)=\widetilde{w}(\mu\pi)=0$, $i=0,1,...,m$
such that
\begin{equation}\label{eq-particular-solutions1}
\widetilde{\Delta} \Big( e^{-\gamma t}\widetilde{w}\Big)= e^{-\gamma t}\widetilde{h}
\quad\text{in }\mathbb R\times [0,\mu\pi].\end{equation}
Moreover, if $\widetilde{h} \in  C^{\infty}[0,\mu \pi]$,
then $\widetilde{w}\in   C^{\infty}[0,\mu \pi]$.
\end{lemma}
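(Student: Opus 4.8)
The plan is to separate variables: since $\widetilde{\Delta}\bigl(e^{-\gamma t}\widetilde{w}(\theta)\bigr)=e^{-\gamma t}\bigl(\widetilde{w}''+\gamma^2\widetilde{w}\bigr)$, the identity \eqref{eq-particular-solutions1} holds on all of $\mathbb{R}\times[0,\mu\pi]$ if and only if $\widetilde{w}$ solves the two-point boundary value problem
\begin{equation*}
\widetilde{w}''+\gamma^2\widetilde{w}=\widetilde{h}\quad\text{on }[0,\mu\pi],\qquad \widetilde{w}(0)=\widetilde{w}(\mu\pi)=0.
\end{equation*}
So the whole lemma reduces to producing such a $\widetilde{w}$, and this is an elementary constant-coefficient ODE with homogeneous Dirichlet data.

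I would solve it explicitly by variation of parameters, writing
\begin{equation*}
\widetilde{w}(\theta)=A\cos(\gamma\theta)+B\sin(\gamma\theta)+\frac{1}{\gamma}\int_0^\theta\sin\bigl(\gamma(\theta-s)\bigr)\widetilde{h}(s)\,ds,
\end{equation*}
which already belongs to $C^2[0,\mu\pi]$ for every $\widetilde{h}\in C[0,\mu\pi]$ and every choice of $A,B$. The condition $\widetilde{w}(0)=0$ forces $A=0$, and $\widetilde{w}(\mu\pi)=0$ becomes $B\sin(\gamma\mu\pi)=-\frac{1}{\gamma}\int_0^{\mu\pi}\sin\bigl(\gamma(\mu\pi-s)\bigr)\widetilde{h}(s)\,ds$. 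This is exactly where the hypothesis $\gamma\notin\mathcal{I}_1$ is used: if $\sin(\gamma\mu\pi)=0$ then $\gamma\mu\in\mathbb{Z}$, hence $\gamma=i/\mu$ for some integer $i\ge1$ (using $\gamma>0$), i.e.\ $\gamma\in\mathcal{I}_1$, a contradiction; equivalently, $\gamma^2$ is not a Dirichlet eigenvalue $(i/\mu)^2$ of $-d^2/d\theta^2$ on $[0,\mu\pi]$, so the operator has trivial kernel and the BVP is uniquely solvable. Thus $\sin(\gamma\mu\pi)\ne0$, the constant $B$ is uniquely determined, and we obtain the desired $\widetilde{w}\in C^2[0,\mu\pi]$ with $\widetilde{w}(0)=\widetilde{w}(\mu\pi)=0$. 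For the last assertion, if $\widetilde{h}\in C^\infty[0,\mu\pi]$ I would bootstrap in $\widetilde{w}''=\widetilde{h}-\gamma^2\widetilde{w}$: $\widetilde{w}\in C^2$ gives $\widetilde{w}''\in C^2$, hence $\widetilde{w}\in C^4$, and iterating yields $\widetilde{w}\in C^\infty[0,\mu\pi]$ (this is also visible directly from the formula by differentiating under the integral sign).

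There is no genuine obstacle here; the statement is an elementary ODE fact. The only point deserving a moment's care is checking that $\gamma\notin\mathcal{I}_1$ is precisely the non-resonance condition that removes the kernel. An alternative I would mention but not use is the eigenfunction expansion $\widetilde{h}=\sum_i\widetilde{h}_i\phi_i$, $\widetilde{w}=\sum_i\bigl(\gamma^2-(i/\mu)^2\bigr)^{-1}\widetilde{h}_i\phi_i$, which is conceptually transparent but, for merely continuous $\widetilde{h}$, requires an extra argument for convergence of the series in $C^2$; the explicit variation-of-parameters solution avoids this and handles both the $C[0,\mu\pi]\to C^2[0,\mu\pi]$ statement and the $C^\infty$ statement uniformly.
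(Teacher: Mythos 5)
Your proof is correct and follows exactly the paper's approach: reduce via $\widetilde{\Delta}(e^{-\gamma t}\widetilde{w})=e^{-\gamma t}(\widetilde{w}''+\gamma^2\widetilde{w})$ to the two-point BVP $\widetilde{w}''+\gamma^2\widetilde{w}=\widetilde{h}$, $\widetilde{w}(0)=\widetilde{w}(\mu\pi)=0$, and solve it. The paper simply states that the ODE can be solved; you have usefully filled in the variation-of-parameters formula, made explicit that $\gamma\notin\mathcal{I}_1$ is the non-resonance condition guaranteeing $\sin(\gamma\mu\pi)\neq 0$, and supplied the bootstrap for the $C^\infty$ claim.
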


\begin{proof}
 A straightforward calculation yields $$\widetilde{\Delta}  \left( e^{-\gamma t}\widetilde{w}\right)=  e^{-\gamma t}\left(\frac{d^2 \widetilde{w}}{d \theta^2}+\gamma^2\widetilde{w}\right).$$
 We have the desired result by solving the ODE: $\frac{d^2 \widetilde{w}}{d \theta^2}+\gamma^2\widetilde{w}=\widetilde{h}$ with $\widetilde{w}(0)=\widetilde{w}(\mu\pi)=0$.
\end{proof}

\begin{lemma}\label{lemma-particular-solutions2}
Let $m$ be a nonnegative integer,  $\gamma$ be a positive constant,
and $\widetilde{h_0}, \widetilde{h_1}, \cdots, \widetilde{h_m}$ $\in C[0,\mu \pi]$.
Then, there exist $\widetilde{w_0}, \widetilde{w_1}, \cdots, \widetilde{w_m},\widetilde{ w_{m+1}}\in  C^2[0,\mu \pi]$ with $\widetilde{w_i}(0)=\widetilde{w_i}(\mu\pi)=0$, $i=0,1,...,m$
such that
\begin{equation}\label{eq-particular-solutions2}
\widetilde{\Delta} \Big(\sum_{j=0}^{m+1} t^je^{-\gamma t}\widetilde{w_j}\Big)=\sum_{j=0}^{m} t^je^{-\gamma t}\widetilde{h_j}
\quad\text{in }\mathbb R\times [0,\mu\pi].\end{equation}
Moreover, if $\widetilde{h_j} \in  C^{\infty}[0,\mu \pi]$ for any $j=0, 1, \cdots, m$,
then $\widetilde{w_j}\in   C^{\infty}[0,\mu \pi]$  for $j=0, 1, \cdots, m+1$.
\end{lemma}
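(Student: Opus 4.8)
The plan is to reduce \eqref{eq-particular-solutions2} to a triangular system of second-order ODEs in $\theta$ and to solve that system from the highest power of $t$ downwards. Write $\mathcal L w:=w''+\gamma^2w$ for the operator acting on functions $w\in C^2[0,\mu\pi]$ with $w(0)=w(\mu\pi)=0$. First I would record the elementary identity
$$\widetilde\Delta\big(t^je^{-\gamma t}w(\theta)\big)=t^je^{-\gamma t}\mathcal Lw-2\gamma j\,t^{j-1}e^{-\gamma t}w+j(j-1)\,t^{j-2}e^{-\gamma t}w,$$
which follows by differentiating $t^je^{-\gamma t}$ twice in $t$ and noting that $\partial_\theta^2$ acts only on the factor $w(\theta)$. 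Summing over $j=0,\dots,m+1$ and setting $\widetilde{w_{m+2}}=\widetilde{w_{m+3}}:=0$ and $\widetilde{h_{m+1}}:=0$, the coefficient of $t^ke^{-\gamma t}$ on the left of \eqref{eq-particular-solutions2} equals $\mathcal L\widetilde{w_k}-2\gamma(k+1)\widetilde{w_{k+1}}+(k+1)(k+2)\widetilde{w_{k+2}}$, so \eqref{eq-particular-solutions2} is equivalent to the system
$$\mathcal L\widetilde{w_k}=\widetilde{h_k}+2\gamma(k+1)\widetilde{w_{k+1}}-(k+1)(k+2)\widetilde{w_{k+2}}\qquad(0\le k\le m+1),$$
all with homogeneous Dirichlet data at $\theta=0,\mu\pi$; for $k=m+1$ this reads $\mathcal L\widetilde{w_{m+1}}=0$.

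Next I would solve this system by downward recursion in $k$, beginning at $k=m+1$. The two-point boundary value problem $\mathcal Lw=g$, $w(0)=w(\mu\pi)=0$, satisfies the Fredholm alternative: since $\mathcal L\phi_i=(\gamma^2-(i/\mu)^2)\phi_i$, the homogeneous problem has only the trivial solution when $\gamma\notin\mathcal I_1$, while for $\gamma=i_0/\mu\in\mathcal I_1$ its kernel is $\mathrm{span}\{\phi_{i_0}\}$ and $\mathcal Lw=g$ is solvable exactly when $\langle g,\phi_{i_0}\rangle_{L^2[0,\mu\pi]}=0$, with the solution unique modulo $\phi_{i_0}$. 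When $\gamma\notin\mathcal I_1$ every equation in the recursion has a unique solution (and then $\widetilde{w_{m+1}}=0$), finishing the proof. When $\gamma=i_0/\mu\in\mathcal I_1$, I would carry along the resonant coefficients $a_k:=\langle\widetilde{w_k},\phi_{i_0}\rangle$: starting from $\widetilde{w_{m+1}}=a_{m+1}\phi_{i_0}$ with $a_{m+1}$ still free (and with $a_{m+2}=a_{m+3}=0$), the compatibility condition for the $k$-th equation,
$$\langle\widetilde{h_k},\phi_{i_0}\rangle+2\gamma(k+1)a_{k+1}-(k+1)(k+2)a_{k+2}=0,$$
uniquely determines $a_{k+1}$ because $2\gamma(k+1)\neq0$ (here $a_{k+2}$ was fixed at the previous step); one then takes $\widetilde{w_k}$ to be the $\phi_{i_0}$-orthogonal solution plus $a_k\phi_{i_0}$, leaving $a_k$ free until the next step (and setting $a_0:=0$). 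This produces $\widetilde{w_0},\dots,\widetilde{w_{m+1}}\in C^2[0,\mu\pi]$ with the stated boundary values. For the smoothness claim, if each $\widetilde{h_j}\in C^\infty[0,\mu\pi]$ then the right-hand side of every ODE in the recursion is $C^\infty$ (as $\phi_{i_0}\in C^\infty$), and any bounded solution of $w''=g-\gamma^2w$ with $g\in C^\infty$ is itself $C^\infty$ by bootstrapping, so every $\widetilde{w_j}\in C^\infty[0,\mu\pi]$.

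The step I expect to be the main obstacle is the resonant case $\gamma\in\mathcal I_1$: there the individual ODEs are not solvable on their own, and one must exploit the triangular coupling — together with the nonvanishing of the coupling constants $2\gamma(k+1)$ — to absorb each solvability obstruction into the coefficient of the next higher power of $t$. This is precisely why \eqref{eq-particular-solutions2} carries one more unknown function on the left ($\widetilde{w_{m+1}}$) than forcing terms on the right.
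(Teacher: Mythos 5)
Your proof is correct and is essentially the standard argument. The identity for $\widetilde{\Delta}\big(t^je^{-\gamma t}w(\theta)\big)$ is right, collecting the coefficients of $t^ke^{-\gamma t}$ gives exactly the triangular Dirichlet ODE system $\mathcal L\widetilde{w_k}=\widetilde{h_k}+2\gamma(k+1)\widetilde{w_{k+1}}-(k+1)(k+2)\widetilde{w_{k+2}}$, and the downward recursion using the Fredholm alternative for $\mathcal Lw=w''+\gamma^2w$ on $[0,\mu\pi]$ with homogeneous Dirichlet data --- with $\phi_{i_0}$ spanning the kernel when $\gamma=i_0/\mu\in\mathcal I_1$ and the nonvanishing coupling $2\gamma(k+1)$ used to absorb the solvability obstruction into the coefficient of the next higher power of $t$ --- is precisely why the left side carries the extra term $\widetilde{w_{m+1}}$. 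The paper omits its own proof and refers to Lemma 7.3 of \cite{HJS}; your cascade argument is the same construction.
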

The proof is basically the same as Lemma 7.3 in \cite{HJS}, we omit it.

\smallskip

We now  are ready to prove Theorem \ref{thm-v-1}.
\begin{proof}[Proof of Theorem \ref{thm-v-1}]

We first consider the case that
\begin{equation}\label{eq-special}\mathcal I_1\cap \mathcal I_2=\emptyset.\end{equation}
That is the case $\mu$ is an irrational number.

We always assume $t>T$ for some constant $T$ sufficiently large.

By \eqref{v-est-t}, it is easy to check
\begin{equation} \| e^{-2t}\widetilde{ F_0(v)} \|_{C^{\alpha}([\tau-1,\tau+1]\times[0,\mu\pi])  }\leq Ce^{-(2+2\alpha)\tau},\quad \forall \tau>T.
\end{equation}

{\it Step 1.} \eqref{add-1} holds for $m=1$.  We consider the following three cases.

{\it Case 1.1.} $2+2\alpha>\mu_1=\frac{1}{\mu}$.

Since $2+2\alpha<\frac 2\mu$,  by Lemma \ref{lemma-nonhomogeneous-linearized-eq}, we conclude that
there exists a constant $c$, $\widetilde{v_1}= ce^{-\mu_1t}\sin(\mu_1\theta)$ such that,
for any $t>T$,
\begin{equation*}
|\widetilde{v}-\widetilde{v_1} |\leq Ce^{-(2+2\alpha) t}.
\end{equation*}
Set $\widetilde{r_1}=\widetilde{v}-\widetilde{v_1}$. In the $x$-coordinates, by
\begin{equation}\label{eq-v} (1+v_{22})v_{11}+v_{22}-v_{12}v_{12}=0,\end{equation}
we have
$$(1+v_{22})r_{1,11}+r_{1,22}-v_{12}r_{1,12}=-v_{22}v_{1,11}+v_{12}v_{1,12}\triangleq h.$$
Then, by Schauder estimate, we have
\begin{align}\label{r1-est}\begin{split}
r^{2+\alpha}[D^2r_1 ]_{C^{\alpha}(R_{1,r})}
\leq  C(\mu,\alpha)\big(\|r_1\|_{L^{\infty}(R_{2,r})}+  r^{2+\alpha}[h]_{C^{\alpha}(R_{2,r})} + r^2\|h\|_{L^{\infty}(R_{2,r})}\big)
\end{split}\end{align}
where $R_{1,r}=\{x| \frac{r}{e}\leq |x|\leq er\}\bigcap V_{\mu}$, $R_{2,r}=\{x| \frac{r}{e^2}\leq |x|\leq e^2r\}\bigcap V_{\mu}$.

Hence, in $(t, \theta)$-coordinates, by interpolation inequality, there exists $\alpha_1>0$ such that
\begin{equation}\label{r1-est-t} \|\widetilde{r_1}\|_{C^{2, \alpha}([\tau-1,\tau+1]\times[0,\mu\pi])  }\leq Ce^{-(\mu_1+\alpha_1) \tau}.\end{equation}

{\it Case 1.2.}  If $2+2\alpha=\mu_1$.

For any $\varepsilon>0$ sufficiently small, we have
\begin{equation*} \| e^{-2t}\widetilde{ F_0(v)} \|_{C^{\alpha}([\tau-1,\tau+1]\times[0,\mu\pi])  }\leq Ce^{-(2+2\alpha-\varepsilon)\tau}.\end{equation*}
Lemma \ref{lemma-nonhomogeneous-linearized-eq}
implies
\begin{equation}\label{eq-U2-03-0b-Lip}
|\widetilde{v} |\leq Ce^{-(\mu_1-\varepsilon) t}.
\end{equation}
By a similar argument as in the proof \eqref{r1-est}, we have
\begin{equation*} \|\widetilde{v}\|_{C^{2, \alpha}([\tau-1,\tau+1]\times[0,\mu\pi])  }\leq Ce^{-(\mu_1-\varepsilon) \tau}
\end{equation*}
and
\begin{equation*} \| e^{-2t}\widetilde{ F_0(v)} \|_{C^{\alpha}([\tau-1,\tau+1]\times[0,\mu\pi])  }\leq Ce^{-(2\mu_1-2\varepsilon)\tau}.\end{equation*}
Choose $\varepsilon$ small enough such that  $2\mu_1-2\varepsilon>\mu_1$.

Then, by Case 1.1, there exists a constant $c$ and $\widetilde{v_1}= ce^{-\mu_1t}\sin (\mu_1\theta)$, such that
$\widetilde{r_1}=\widetilde{v}-\widetilde{v_1}$ satisfying
\begin{equation}\label{r1-est-t}
 \|\widetilde{r_1}\|_{C^{2, \alpha}([\tau-1,\tau+1]\times[0,\mu\pi])  }\leq Ce^{-(\mu_1+\alpha_1) \tau},
\end{equation}
for some $\alpha_1>0$.

{\it Case 1.3.}
 If $2+2\alpha<\mu_1$.

By Lemma \ref{lemma-nonhomogeneous-linearized-eq}, we have
\begin{equation*}
|\widetilde{v} |\leq Ce^{-(2+2\alpha) t}.
\end{equation*}

By a similar argument as in the proof \eqref{r1-est}, we have
\begin{equation*}
 \|\widetilde{v}\|_{C^{2, \alpha}([\tau-1,\tau+1]\times[0,\mu\pi])  }\leq Ce^{-(2+2\alpha) \tau},\end{equation*}
and hence
\begin{equation*}
 \| e^{-2t}\widetilde{ F_0(v)} \|_{C^{\alpha}([\tau-1,\tau+1]\times[0,\mu\pi])  }\leq Ce^{-(2+4\alpha)\tau}.\end{equation*}
Repeating the above argument finite times, we have
\begin{equation*} \|\widetilde{v}\|_{C^{2, \alpha}([\tau-1,\tau+1]\times[0,\mu\pi])  }\leq Ce^{-(2+2^{i}\alpha) \tau},\end{equation*}
and
\begin{equation*} \| e^{-2t}\widetilde{ F_0(v)} \|_{C^{\alpha}([\tau-1,\tau+1]\times[0,\mu\pi])  }\leq Ce^{-(2+2^{i+1}\alpha)\tau}
\end{equation*}
where $2+2^{i}\alpha< \mu_1\le 2+2^{i+1}\alpha$.

Now, we are in a similar situation as at the beginning of Case 1.1 or Case 1.2.
Hence, in this case, we also have estimate \eqref{r1-est-t}.

In summary, in all three cases,
there exists a constant $c$ and $\widetilde{v_1}= ce^{-\mu_1t}\sin(\mu_1 \theta)$, such that
$\widetilde{r_1}=\widetilde{v}-\widetilde{v_1}$ satisfies
\begin{equation} \label{est-r1}
\|\widetilde{r_1}\|_{C^{2, \alpha}([\tau-1,\tau+1]\times[0,\mu\pi])  }\leq Ce^{-(\mu_1+\alpha_1) \tau},
\end{equation}
for some $\alpha_1>0$. Also, we have $\widetilde{r_1}(t,0)=\widetilde{r_1}(t,\mu\pi)=0$.
Hence,
\begin{equation*} |\widetilde{r_1}|\leq  C\theta(\mu\pi-\theta)e^{-(\mu_1+\alpha_1) t}.\end{equation*}
This finishes the discussion of Step 1.
\smallskip

{\it Step 2.}
Assume the following estimates hold
\begin{equation}\label{induction-1}
\|\tilde v-\sum_{i=1}^m c_i(\theta) e^{-\mu_i t}\|_{C^{2,\alpha}([\tau-1,\tau+1]\times[0,\mu\pi])}\le C_m e^{-(\mu_m+\alpha_m)\tau}
\end{equation}
for some $\alpha_m>0$. Here we can also choose  $\alpha_m$ to be a number such that $\{\alpha_m+\mathcal I\}\bigcap\mathcal I =\emptyset$. By Step 1, \eqref{induction-1} is true for $m=1$. Now we improve \eqref{induction-1} to $m+1$.
Denote \begin{equation*}
\tilde r_m=\tilde v-\sum_{i=1}^m c_i(\theta) e^{-\mu_i t},\quad \tilde v_i=c_i(\theta) e^{-\mu_i t},\quad i=1,\cdots,m.
\end{equation*}
We will discuss the following two different situations.
\begin{itemize}
\item[(I).] $\mu_m+\alpha_m>\mu_{m+1}$.   Then we are done. Since we can choose $c_{m+1}=0$ and $\alpha_{m+1}=\mu_m+\alpha_m-\mu_{m+1}$.
\item[(II).] $\mu_m+\alpha_m<\mu_{m+1}$.
By the definition of $\widetilde r_m$ and \eqref{eq-in-t},  $\widetilde r_m$ satisfies
\begin{equation}\label{eqn-r-m}
\widetilde \Delta \widetilde r_m=e^{-2t}\widetilde {F_0(v)}+\sum_{i=1}^m \widetilde \Delta \widetilde v_i.
\end{equation}
By the decomposition of $\widetilde v$ and the definition of $F_0(v)$,  $e^{-2t}\widetilde {F_0(v)}$ is
the sum of the terms with following orders
\begin{equation}\label{induction-2}
e^{-(\mu_i+\mu_j-2)t},\	e^{-(\mu_i+\mu_m+\alpha_m-2)t},\	e^{-(2\mu_m+2\alpha_m-2)t},\	i,j=1,\cdots,m.
\end{equation}
Denote $$\mathcal J_m=\{\mu_i+\mu_j-2|i,j=1,\cdots,m\}\cup\{\mu_i+\mu_m+\alpha_m-2|i=1,\cdots,m\}\cup \{2\mu_m+2\alpha_m-2\}.
$$
Rearrange the elements in $\mathcal J_m$ as
\begin{equation*}
\gamma_{1}<\gamma_{2}<\cdots<\gamma_N.
\end{equation*}
Also by the assumptions, we also know $\mathcal J_m\cap \mathcal I_1=\emptyset$.
By induction assumption \eqref{induction-1}, one knows $|\widetilde \Delta \widetilde r_m|\le C_me^{-(\mu_m+\alpha_m)t}$.  Let $i_0$ be the index such that $\gamma_{i_0}<\mu_m+\alpha_m<\gamma_{i_0+1}$.  Then one knows
\begin{equation*}
\widetilde\Delta \widetilde r_m=\sum_{i=i_0+1}^N C_{i}(\theta,t) e^{-\gamma_i t}
\end{equation*}
for some functions $C_i(\theta,t)$ such that $\|C_i(\theta,t)\|_{C^{\alpha}}\le C_m$.
Consider the following cases.
\\ (II.1) $\gamma_{i_0+1}=\mu_i+\mu_j-2$ for some $i,j\in\{1,\cdots,m\}$.
\\ Then one knows $\gamma_{i_0+1}=\mu_{k}$ for some $k\ge i+j\geq m+1$.
\\  (II.1.1) If $\gamma_{i_0+1}=\mu_{m+1}\notin \mathcal I_1$. Then one has $C_{i_0+1}(\theta,t)=C_{i_0+1}(\theta)$.  By Lemma \ref{lemma-particular-solutions1},  there exists $\widetilde v_{m+1}=c_{m+1}(\theta)e^{-\gamma_{i_0+1}t}$ solves
\begin{equation}
\widetilde \Delta \widetilde v_{m+1}=C_{i_0+1}(\theta)e^{-\gamma_{i_0+1}t},\quad \widetilde v_{m+1}(0,t)=\widetilde v_{m+1}(\mu\pi,t)=0.
\end{equation}
Then $\widetilde r_{m+1}=\widetilde r_m-\widetilde v_{m+1}$ satisfies
\begin{equation*}
|\widetilde \Delta \widetilde r_{m+1}|\le C_{m+1}e^{-\gamma_{i_0+2}t}.
\end{equation*}
 (II.1.1.1) If $(\gamma_{i_0+1},\gamma_{i_0+2})\cap \mathcal I_1=\emptyset$, then by Lemma \ref{lemma-nonhomogeneous-linearized-eq} and Schauder estimates, one knows
\begin{equation*}
\|\widetilde r_{m+1}\|_{C^{2,\alpha}([\tau-1,\tau+1]\times[0,\mu\pi])}\le C_{m+1}e^{-\gamma_{i_0+2}\tau}.
\end{equation*}
  (II.1.1.2)
If $(\gamma_{i_0+1},\gamma_{i_0+2})\cap \mathcal I_1=\{\frac{l}{\mu},\cdots,\frac{l+l'}{\mu}\}$ for some $l\ge 1,l'\ge 0$. Then for $\varepsilon>0$ small enough, we can apply Lemma \ref{lemma-nonhomogeneous-linearized-eq} to $\widetilde r_{m+1}$ with $\gamma=\frac{l}{\mu}-\varepsilon$ and also Schauder estimates to get
\begin{equation*}
\|\widetilde r_{m+1}\|_{C^{2,\alpha}([\tau-1,\tau+1]\times[0,\mu\pi])}\le C_{m+1}e^{-(\frac{l}{\mu}-\varepsilon)\tau}.
\end{equation*}
This proves \eqref{induction-1} for $m+1$  for Case (II.1.1).
\\ (II.1.2) If $k\ge m+2$ and $\mu_{m+1}\notin \mathcal I_1$. Then for $\varepsilon>0$ small enough, we can apply Lemma \ref{lemma-nonhomogeneous-linearized-eq} to $\widetilde r_{m+1}=\widetilde r_m$ with $\gamma=\mu_{m+1}+\varepsilon$ and also Schauder estimates to get
\begin{equation*}
\|\widetilde r_{m+1}\|_{C^{2,\alpha}([\tau-1,\tau+1]\times[0,\mu\pi])}\le C_{m+1}e^{-(\mu_{m+1}+\varepsilon)\tau}.
\end{equation*}
\\(II.1.3)If $k\ge m+2$ and $\mu_{m+1}\in \mathcal I_1$. For $\varepsilon>0$ small, applying Lemma \ref{lemma-nonhomogeneous-linearized-eq} to $\widetilde r_{m}$ with $\gamma=\mu_{m+1}+\varepsilon$ yields that
\begin{equation*}
\|\widetilde r_m-c_{m+1}e^{-\mu_{m+1}t}\sin(\mu_{m+1}\theta)\|_{C^{2,\alpha}([\tau-1,\tau+1]\times[0,\mu\pi])}\le C_{m+1}e^{-(\mu_{m+1}+\varepsilon)\tau}.
\end{equation*}
Denote $\widetilde r_{m+1}=\widetilde r_m-c_{m+1}e^{-\mu_{m+1}t}\sin(\mu_{m+1}\theta)$.
\\ (II.2)  $\gamma_{i_0+1}=\mu_1+\mu_m+\alpha_m-2$.  Then it is easy to see that $\gamma_{i_0+1}>\mu_{m+1}$. Then we can proceed similar arguments as (II.1.2) and (II.1.3) to get \eqref{induction-1} holds for $m+1$.

\end{itemize}
In both cases (I) and (II), we proved \eqref{induction-1} holds for $m+1$. This proves Theorem \ref{thm-v-1}
for $\mathcal I_1\cap \mathcal I_2=\emptyset$ and in this case, $c_{i,j}=0$ if $j\neq 0$.
\par
{\it Step 3.}  We consider the case $\mathcal I_1\cap \mathcal I_2\ne \emptyset$. In fact, the arguments in Step  1 are exactly the same.  In Step 2, one needs to replace \eqref{induction-1} by
\begin{equation}\label{induction-1-1}
\|\tilde v-\sum_{i=1}^m \sum_{j=0}^{i-1}c_{i,j}(\theta) t^j e^{-\mu_i t}\|_{C^{2,\alpha}([\tau-1,\tau+1]\times[0,\mu\pi])}\le C_m e^{-(\mu_m+\alpha_m)\tau}.
\end{equation}
Similarly, we denote
 \begin{equation*}
\tilde r_m=\tilde v-\sum_{i=1}^m\sum_{j=0}^{i-1} c_{i,j}(\theta) t^j  e^{-\mu_i t},\quad \tilde v_i=\sum_{j=0}^{i-1} c_{i,j}(\theta)   t^j e^{-\mu_i t},\quad i=1,\cdots,m.
\end{equation*}
Then Case (I) in Step 2 is the same. We only need to consider Case (II).  In this case,  $ e^{-2t} \widetilde {F_0(v)}$ is
the sum of the terms with following orders
\begin{equation}\label{induction-2}
t^k e^{-(\mu_i+\mu_j-2)t},\		t^l e^{-(\mu_i+\mu_m+\alpha_m-2)t},\	e^{-(2\mu_m+2\alpha_m-2)t}
\end{equation}
for $i,j=1,\cdots,m,\	0\le k\le i+j-2,\	0\le l\le i-1.$
Also
\begin{equation*}
\widetilde \Delta \widetilde r_m=\sum_{i=i_0+1}^N C_i(\theta,t) t^{2m-2} e^{-\gamma_i t}
\end{equation*}
 for some functions $C_i(\theta,t)$ such that $\|C_i(\theta,t)\|_{C^{\alpha}}\le C_m$.
 The only difference happens in Case (II.1.1).  In this case, $\gamma_{i_0+1}=\mu_{m+1}$ and $\mu_{m+1}$ may also belong $\mathcal I_1$.  Then one has
 \begin{equation*}
C_{i_0+1}(\theta,t) t^{2m-2} e^{-\gamma_{i_0+1} t}=\sum_{i=0}^{m-1}\widetilde C_i(\theta)t^i e^{-\mu_{m+1}t}.
 \end{equation*}
  By Lemma \ref{lemma-particular-solutions2},  there exists $\widetilde v_{m+1}=\sum\limits_{i=0}^m c_{m+1,i}(\theta)t^i e^{-\mu_{m+1}t}$ solving
\begin{equation}
\widetilde \Delta \widetilde v_{m+1}=\sum_{i=0}^{m-1}\widetilde C_i(\theta)t^i e^{-\mu_{m+1}t},\quad \widetilde v_{m+1}(0,t)=\widetilde v_{m+1}(\mu\pi,t)=0.
\end{equation}
Then $\widetilde r_{m+1}=\widetilde r_m-\widetilde v_{m+1}$ satisfies
\begin{equation*}
|\widetilde \Delta \widetilde r_{m+1}|\le C_{m+1,\varepsilon}e^{-(\gamma_{i_0+2}-\varepsilon)t}
\end{equation*}
for any $\varepsilon>0$ small.  The rest discussion are the same as Step 2.
This completes the proof for Theorem \ref{thrm-expansion-1}.
\end{proof}
\smallskip
We now consider the case $\mu \in [1/2,1)$.
\begin{proof}[Proof for Theorem \ref{thrm-expansion-2}]
The proof is basically the same as the proof of Theorem 1. Note that in Step 1, \eqref{v-setup1} implies the constant $c_1$ in  the term $c_1e^{-\mu_1t}\sin(\mu_1 \theta)$ must be $0$ when we apply Lemma \ref{lemma-nonhomogeneous-linearized-eq}.  Then the following proof is basically the same,  We skip the details.
\end{proof}

\section{Regularity of Solutions in cones and polygons}

In this section, we study the obstructions for the higher global regularity of solutions to the Monge-Amp\`ere equation in convex cones and polygons.
We will see the asymptotic expansion derived in Section \ref{sec-expansion} is an effective tool for explain these obstructions.

\begin{theorem} \label{c10-neg-1}
Let $\mu\in (0,1/2)$ and
$u$ be a solution of \eqref{eq-MA-cone-eq}-\eqref{eq-MA-cone-boundary}
satisfying \eqref{v-setup1} and \eqref{v-setup2}. Suppose there exists $\delta>0$ such that
\begin{equation}
v\leq0 \quad\quad \text{in    }V_{\mu}\bigcap B_{\delta}(0).
\end{equation}
Then, either $v\equiv0$ or the constant $c_{1,0}$ in the expansion \eqref{expansion-1} of $u$ is negative.

\end{theorem}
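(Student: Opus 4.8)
The plan is to exploit the leading-order term $c_{1,0}(\theta)|x|^{\mu_1} = c\,|x|^{1/\mu}\sin(\theta/\mu)$ in the expansion \eqref{expansion-1}. Write $c = c_{1,0}$ for the constant in the statement (so $c_{1,0}(\theta) = c\sin(\theta/\mu)$). Since $\mu\in(0,1/2)$ we have $\mu_1 = 1/\mu > 2$, and by \eqref{v-est-t} together with Theorem \ref{thrm-expansion-1} with $m=1$,
\begin{equation}\label{prop-leading}
v(x) = c\,|x|^{1/\mu}\sin\tfrac{\theta}{\mu} + O\big(\theta(\mu\pi-\theta)|x|^{\mu_2}(-\ln|x|)\big),
\end{equation}
where $\mu_2 = \frac{2}{\mu}-2 > \mu_1$. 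Since $\sin(\theta/\mu) > 0$ on the interior $(0,\mu\pi)$ of the cone, the remainder is of strictly higher order than the leading term uniformly on compact angular subsets. The hypothesis $v\le 0$ near the origin then forces $c\le 0$: if $c>0$, choosing any fixed interior ray $\theta_0\in(0,\mu\pi)$ and letting $|x|\to 0$ along that ray, \eqref{prop-leading} gives $v>0$ for $|x|$ small, a contradiction. Hence $c\le 0$, and it remains to rule out $c=0$ unless $v\equiv 0$.

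The case $c=0$ is where the real work lies: I would run a strong-maximum-principle / unique-continuation-type argument. Suppose $c=0$ but $v\not\equiv 0$ on $V_\mu\cap B_\delta(0)$. Recall from \eqref{eq-v} that $\Delta v = -\det D^2 v = F_0(v)$, and observe that in two dimensions $\det D^2 v = v_{11}v_{22}-v_{12}^2$, so $F_0(v)$ is quadratic in the second derivatives of $v$; in particular, since $D^2v(0)=0$ and $v\in C^{2,\alpha}$, we have $|F_0(v)(x)| \le \varepsilon(x)|D^2 v(x)|$ with $\varepsilon(x)\to 0$ as $x\to 0$. More usefully, writing $F_0(v) = -v_{11}v_{22}+v_{12}^2$ as a divergence-form expression against the derivatives of $v$, the equation for $v$ can be recast as a linear uniformly elliptic equation $a_{ij}(x)D_{ij}v = 0$ with coefficients $a_{ij}$ depending on $Dv$ (e.g. $a_{11} = 1+v_{22}$, $a_{22}=1$, $a_{12}=-v_{12}$, as already used just after \eqref{est-r1}), which are Hölder continuous and, near $0$, close to the identity. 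Then $v$ solves a homogeneous linear elliptic equation in $V_\mu\cap B_\delta(0)$, vanishes on $\partial V_\mu$, and satisfies $v\le 0$. If $v\not\equiv 0$, the strong maximum principle gives $v<0$ in the interior, and then the Hopf lemma at a boundary point on one of the two sides of the cone (away from the vertex, where the boundary is smooth) shows the normal derivative of $v$ is strictly negative there. But from the expansion \eqref{expansion-1} with $c_{1,0}\equiv 0$, every term has order $\ge \mu_2 > 2$, so $v = O(|x|^{\mu_2}(-\ln|x|))$, which combined with the boundary condition $v|_{\partial V_\mu}=0$ and $v\in C^{2,\alpha}$ forces the tangential-normal behavior of $v$ along the sides to vanish to order $\mu_2$ at the vertex — this is not yet a contradiction by itself. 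The cleaner route: if $c=0$, then by iterating the argument of Step 2 in the proof of Theorem \ref{thm-v-1}, \emph{every} coefficient $c_{i,j}$ in the expansion is determined by lower-order data, and I claim they must all vanish, giving $v = O(|x|^N)$ for every $N$; then a standard argument (e.g. the equation $\Delta v = F_0(v)$ is quadratic, so $v$ flat at $0$ plus an ODE/Carleman-type estimate on cylinders as in Lemma \ref{lemma-decay-sol-Ljw=f-ODE}) forces $v\equiv 0$ near $0$, and finally a connectedness/maximum-principle argument extends this to $v\equiv 0$ on $V_\mu\cap B_\delta(0)$.

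Concretely, the steps in order are: (1) extract \eqref{prop-leading} from Theorem \ref{thrm-expansion-1}; (2) evaluate along an interior ray and use $v\le 0$ to conclude $c\le 0$; (3) assuming $c=0$, show inductively that all higher coefficients $c_{i,j}$ vanish — the induction is driven by the fact that the nonlinearity $F_0(v)$ feeds only products of strictly-higher-order terms back into the right-hand side, so once the leading term is absent no new nonzero term can be created (this parallels exactly the bookkeeping in \eqref{induction-2} and the cases (II.1)–(II.2)); (4) conclude $v$ vanishes to infinite order at $0$; (5) upgrade infinite-order vanishing to $v\equiv 0$ near $0$ via the cylindrical-coordinate ODE estimate of Lemma \ref{lemma-decay-sol-Ljw=f-ODE} applied with arbitrarily large $\gamma$, or via a unique-continuation argument for the linear elliptic equation $a_{ij}D_{ij}v=0$; (6) propagate $v\equiv 0$ to the full region by the strong maximum principle.

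The main obstacle I anticipate is step (5): going from ``$v = O(|x|^N)$ for all $N$'' to ``$v\equiv 0$'' is a genuine unique-continuation statement and does not follow formally from the asymptotic expansion alone (the expansion is only asymptotic). I would handle it by working in the $(t,\theta)$-cylinder: $\widetilde v$ satisfies $\widetilde\Delta\widetilde v = e^{-2t}\widetilde{F_0(v)}$ with $\widetilde{F_0(v)}$ quadratic in $D^2\widetilde v$, and $\|\widetilde v\|_{C^{2,\alpha}([t-1,t+1]\times[0,\mu\pi])} \le C_N e^{-Nt}$ for every $N$; projecting onto the modes $\phi_i$ and applying Lemma \ref{lemma-decay-sol-Ljw=f-ODE} with the source $e^{-2t}\widetilde{F_0(v)}$ now bounded by (say) $C e^{-2t}\|\widetilde v\|_{C^2}^2$, a Gronwall/bootstrap argument on the tail forces each modal component, hence $\widetilde v$ itself, to be identically zero for $t$ large — i.e. $v\equiv 0$ near the vertex. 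The remaining propagation is routine since $v$ solves a homogeneous linear elliptic equation and vanishes on an open set.
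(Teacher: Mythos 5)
Your Steps (1)--(2) are fine: if $c>0$, then along an interior ray the leading term $c\,|x|^{1/\mu}\sin(\theta/\mu)$ dominates the remainder $O(|x|^{\mu_2}(-\ln|x|))$ and forces $v>0$ near the origin, contradicting $v\le 0$. So $c\le 0$. The trouble starts with Step (3).

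The claim that ``once the leading term is absent no new nonzero term can be created'' is not correct, and it misreads the structure of the recursion in the proof of Theorem \ref{thm-v-1}. The index set $\mathcal I=\mathcal I_1\cup\mathcal I_2$ has two sources: the $\mathcal I_2$ coefficients are indeed particular solutions driven by the nonlinearity $F_0(v)$ (and would vanish if all lower terms vanish), but the $\mathcal I_1$ coefficients are \emph{free constants} coming from the homogeneous kernel $\mathrm{Ker}(L_i)$; they are produced by Lemma \ref{lemma-nonhomogeneous-linearized-eq} (the constants $c_i$ there) and by cases (II.1.2)--(II.1.3) in Step~2, and they are determined by the particular solution $v$, not by the recursion. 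Setting $c_{1,0}=0$ says nothing about the coefficient at the next $\mathcal I_1$ frequency $2/\mu$. To rule these out you would have to invoke the sign constraint $v\le 0$ again --- the angular profiles $\sin(i\theta/\mu)$ for $i\ge 2$ change sign on $(0,\mu\pi)$, so a sign-definite $v$ cannot have a leading $\mathcal I_1$-mode with $i\ge 2$ --- but you never make that argument, and even then the bookkeeping of which mode is leading after repeatedly killing free constants is delicate. Step (5) is also a genuine gap: passing from vanishing to infinite order to $v\equiv 0$ is a strong unique continuation statement, and the asymptotic expansion (which is only asymptotic) plus a vague Gronwall sketch does not deliver it; you would need a Carleman estimate or an equivalent, which is substantially more machinery than the problem requires.

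The paper's actual argument is much simpler and bypasses the $c=0$ case entirely. Assume $v\not\equiv 0$. By the strong maximum principle $v<0$ in $V_\mu\cap B_\delta(0)$, and by Hopf's lemma $\partial_\theta v<0$ at $\theta=0$ and $\partial_\theta v>0$ at $\theta=\mu\pi$ on the arc $r=\delta/2$. Comparing with the positive harmonic function $h=|x|^{1/\mu}\sin(\theta/\mu)$, these give $v\le -\varepsilon h$ on $V_\mu\cap\partial B_{\delta/2}(0)$ for some $\varepsilon>0$. One then shows $v\le -\varepsilon h$ throughout $V_\mu\cap B_{\delta/2}(0)$ by the comparison principle built into the Monge--Amp\`ere structure: if $v+\varepsilon h$ had a positive interior maximum at $x_0$, then $D^2(\tfrac{|x|^2}{2}+v)\le D^2(\tfrac{|x|^2}{2}-\varepsilon h)$ at $x_0$ with both sides positive definite, yet since $h$ is harmonic one computes $\det D^2(\tfrac{|x|^2}{2}-\varepsilon h)=1+\varepsilon^2\det D^2h<1=\det D^2(\tfrac{|x|^2}{2}+v)$, a contradiction. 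Now $v\le -\varepsilon h$ fed into the expansion \eqref{expansion-1} (dividing by $|x|^{\mu_1}\sin(\theta/\mu)$ along a ray and letting $|x|\to 0$) gives $c\le -\varepsilon<0$ outright. This comparison-with-a-harmonic-barrier idea is the key ingredient your proposal is missing; it eliminates the need to analyze the $c=0$ case at all and requires no unique continuation.
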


\begin{proof}
If $v\equiv0$ in $V_{\mu}\bigcap B_{\delta}(0)$,  the analyticity of $v$ yields $v\equiv0$ in $V_{\mu}\bigcap B_{1}(0)$.  Otherwise, by the strong maximum principle,
$v<0 $ in $V_{\mu}\bigcap B_{\delta}(0)$. By the Hopf's lemma, we have
 $$\frac{\partial v}{\partial \theta}|_{r=\frac{\delta}{2},\theta=0}<0,$$
 and
 $$\frac{\partial v}{\partial \theta}|_{r=\frac{\delta}{2},\theta=\mu \pi}>0.$$
Set $h=|x|^{\frac 1\mu}\sin\frac{\theta}{\mu}$. Then, for $\varepsilon>0$ sufficiently small,
\begin{equation*}
v\leq - \varepsilon h\quad\quad \text{on    }V_{\mu}\bigcap \partial B_{\frac{\delta}{2}}(0).
\end{equation*}
Claim:
\begin{equation}\label{v-le}
v\leq - \varepsilon h\quad\quad \text{in    }V_{\mu}\bigcap B_{\frac{\delta}{2}}(0).
\end{equation}
Suppose not, then there exists $x_0\in V_{\mu}\bigcap B_{\frac{\delta}{2}}(0)$ such that
$$(v+\varepsilon h)(x_0)=\max_{x\in  \overline{ V_{\mu}\bigcap B_{\frac{\delta}{2}}(0})}[v-(-\varepsilon h)]  >0$$
Then, at $x_0$,
\begin{equation*}
\big[D^2 \big(\frac{|x|^2}{2}- \varepsilon h \big)\big]\geq \big[ D^2 \big(\frac{|x|^2}{2}+v\big)\big],
\end{equation*}
Hence, the matrix $\big[D^2\big(\frac{|x|^2}{2}- \varepsilon h\big) \big]$ is positive definite at $x_0$.
On the other hand, since $\Delta h=0$, one has
\begin{equation*}
\det D^2\big(\frac{|x|^2}{2}- \varepsilon h\big)=1+\varepsilon^2(h_{11}h_{22}-h_{12}^2)<1=\det D^2 \big(\frac{|x|^2}{2}+v\big),
\end{equation*}
which  yields a contradiction. Therefore, we have \eqref{v-le}.
Then the desired result follows directly by the expansion \eqref{expansion-1}.

\end{proof}

\begin{theorem}
Let $\mu\in (1/2,1)$ and
$u\in C^2 (\overline{V_{\mu}\bigcap  B_{1}(0)})$ be a solution of \eqref{eq-MA-cone-eq}-\eqref{eq-MA-cone-boundary} satisfying \eqref{v-setup2}. Suppose there exists $\delta>0$ such that
\begin{equation}
v\leq0 \quad\quad \text{in    }V_{\mu}\bigcap  B_{\delta}(0).
\end{equation}
Then, $v\equiv 0$.

\end{theorem}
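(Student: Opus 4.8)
The plan is to run the determinant-comparison argument from the proof of Theorem \ref{c10-neg-1} with the harmonic barrier $h:=|x|^{1/\mu}\sin(\theta/\mu)$, and then to quantify the resulting lower bound against the Taylor expansion of $v$ at the vertex. The decisive point is that for $\mu\in(1/2,1)$ the exponent $1/\mu$ lies strictly between $1$ and $2$, so a term of size $|x|^{1/\mu}$ is incompatible with $v=o(|x|^2)$.

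First I would record two facts. Since $u$ is convex with $\det D^2u=1$, its Hessian is positive definite with eigenvalues $\lambda_1,\lambda_2$ satisfying $\lambda_1\lambda_2=1$, so $\Delta v=\operatorname{tr}(D^2u)-2=\lambda_1+\lambda_2-2\ge 2\sqrt{\lambda_1\lambda_2}-2=0$; thus $v$ is subharmonic (equivalently, $v$ solves a uniformly elliptic linear equation without zeroth-order term, obtained by linearizing $\det D^2u=1$). Moreover $v=0$ on the two rays bounding $V_\mu$, so $\nabla v(0)=0$, and together with $v(0)=0$ and \eqref{v-setup2} this gives $v(x)=o(|x|^2)$ as $x\to0$.

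Now suppose, for contradiction, that $v\not\equiv0$ in $V_\mu\cap B_\delta$. By the strong maximum principle $v<0$ in the interior there. By Hopf's lemma on the flat pieces of $\partial V_\mu$ and the continuity of $\partial_\theta v$ up to the boundary, there are $c_0>0$ and $\eta_0\in(0,\mu\pi/2)$ with $v(\tfrac\delta2,\theta)\le -c_0\theta$ on $[0,\eta_0]$ and $v(\tfrac\delta2,\theta)\le -c_0(\mu\pi-\theta)$ on $[\mu\pi-\eta_0,\mu\pi]$; set $c_1:=-\max_{\theta\in[\eta_0,\mu\pi-\eta_0]}v(\tfrac\delta2,\theta)>0$. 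Using $\sin(\theta/\mu)\le\min\{\theta,\mu\pi-\theta\}/\mu$ and $h\le(\tfrac\delta2)^{1/\mu}$ on $\{|x|=\delta/2\}$, and the fact that $v=-\varepsilon h=0$ on the two radial parts of $\partial(V_\mu\cap B_{\delta/2})$, one checks that whenever $\varepsilon>0$ satisfies $\varepsilon(\tfrac\delta2)^{1/\mu}\le\min\{\mu c_0,c_1\}$ one has $v\le-\varepsilon h$ on all of $\partial(V_\mu\cap B_{\delta/2})$. If $v+\varepsilon h$ attained a positive maximum at an interior point $x_0$ (necessarily $x_0\ne0$), then $D^2u(x_0)\le D^2\big(\tfrac{|x|^2}{2}-\varepsilon h\big)(x_0)$, so the right-hand matrix is positive definite, and by monotonicity of the determinant $1=\det D^2u(x_0)\le\det\big(I-\varepsilon D^2h(x_0)\big)=1+\varepsilon^2\det D^2h(x_0)$, where I used $\Delta h=0$; but $h$ is a nonzero homogeneous harmonic function of noninteger degree, so $D^2h(x_0)\ne0$ and $\det D^2h(x_0)=-(h_{11}^2+h_{12}^2)(x_0)<0$, a contradiction. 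Hence $v\le-\varepsilon h$ throughout $V_\mu\cap B_{\delta/2}$.

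Finally, evaluating on the ray $\theta=\mu\pi/2$, where $\sin(\theta/\mu)=1$, gives $v(r,\mu\pi/2)\le-\varepsilon r^{1/\mu}$ for $0<r<\delta/2$, hence $v(r,\mu\pi/2)/r^2\le-\varepsilon r^{1/\mu-2}\to-\infty$ as $r\to0^+$ because $1/\mu<2$. This contradicts $v(x)=o(|x|^2)$. Therefore $v\equiv0$ in $V_\mu\cap B_\delta$, and since solutions of $\det D^2u=1$ are real-analytic in the interior, unique continuation upgrades this to $v\equiv0$ in $V_\mu\cap B_1$. The genuinely delicate step is the previous paragraph's claim that $v\le-\varepsilon h$ holds along the entire arc $\{|x|=\delta/2\}$, in particular near the two corner points where both $v$ and $h$ vanish to first order; this is exactly what the Hopf estimate is designed to handle, while the interior determinant comparison is verbatim the one already used for $\mu\in(0,1/2)$.
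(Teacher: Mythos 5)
Your proof is correct and follows essentially the same route as the paper: both derive $v\leq -\varepsilon h$ via the strong maximum principle, Hopf's lemma, and the determinant comparison exactly as in the proof of Theorem 3.1, and then observe that a term of size $r^{1/\mu}$ with $1<1/\mu<2$ is incompatible with $u\in C^2$ together with $D^2v(0)=0$. You spell out details (the quantitative Hopf estimate along the arc, the strict negativity of $\det D^2h$ away from the origin, and the final unique-continuation step) that the paper compresses into ``by similar arguments as the proof of Lemma 3.1'' and ``this implies $u$ can't be $C^2$ up to the origin,'' but the substance is identical.
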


\begin{proof}
If $v$ is not identical to $0$,
by similar arguments as the proof of Lemma \ref{c10-neg-1}, we have
\begin{equation*}
v\leq - \varepsilon h\quad\quad \text{on    }V_{\mu}\bigcap B_{\frac{\delta}{2}}(0)
\end{equation*}
for $\varepsilon>0$ sufficiently small, where $h=|x|^{\frac 1\mu}\sin\frac{\theta}{\mu}$.

This implies $u$ can't be $C^2$ up to the origin, which leads to a contradiction.

\end{proof}

Let $c\in(0,1)$ be a constant, $u\in C^2(V_{1/2})\bigcap  C(\overline{V_{1/2}})$ be a solution of
\begin{align}\label{eq-MA-c}
\det D^2 u&=c
\quad\quad\quad\text{   in } V_{1/2},\\
\label{eq-MA-c-boundary}u&=\frac{|x|^2}{2} \quad  \text{ on     }\partial V_{1/2}.
\end{align}
We adapt some notations from \cite{LeSavin2023}.
Denote
\begin{equation*}
P_{c}^{\pm}=\frac{|x|^2}{2}\pm\sqrt{1-c}x_1x_2.
\end{equation*}
According to [Theorem 1.3,\cite{LeSavin2023}], $u$ must be one of the following forms:
\begin{equation*}
P_{c}^{+}, \	P_{c}^{-},\	\lambda^2\overline{P}_c(\frac{x}{\lambda}),\	\lambda^2\underline{P}_c(\frac{x}{\lambda})
\end{equation*}
for some $\lambda\in (0,\infty)$, where $\overline{P}_c\in C^{2,\alpha}(\overline{V_{1/2}\bigcap B_1(0)})$ for some $\alpha\in (0,1)$ is a particular solution to \eqref{eq-MA-c}-\eqref{eq-MA-c-boundary} satisfying
$ P_{c}^{-}<\overline{P}_c< P_{c}^{+}$ and $\overline{P}_c=P_{c}^{+}+O(|x|^{2+\alpha})$  near $x=0$, $\underline{P}_c(x)$ has a conical
singularity at $x=0$.

Let $A_c^+$ be the affine transformation from $ \mathbb{R}^{2}$ to
$ \mathbb{R}^{2}$ given by the matrix

\begin{equation}
A_c^+:= \begin{pmatrix}
1 & -\frac{\sqrt{1-c}}{\sqrt{c}}\\[3pt]
 0 & \frac{1}{\sqrt{c}}
 \end{pmatrix},
\end{equation}
and denote $ V_{\mu^+_c}=(A_c^+)^{-1} V_{1/2}$. Then, $ V_{\mu^+_c}$ is an infinite cone with its vertex at the origin and its
open angle is $\mu^+_c\pi$, where $\mu^+_c=\frac{\arccos \sqrt{1-c} }{\pi} \in(0,1/2)$.

Then
\begin{equation*}
P_c^+\circ A_c^+=\frac{|x|^2}{2}\quad \text{in}\quad  V_{\mu^+_c}.
\end{equation*}
   Set $q=\frac{|x|^2}{2}$ and $q_c^+=q\circ A_c^+$. Then
   \begin{equation*}
   q_c^+=\frac{|x|^2}{2}\quad  \text{on}\quad \partial V_{\mu^+_c}.
   \end{equation*}
 For $u\in C^2(V_{1/2})\bigcap  C(\overline{V_{1/2}})$ solving \eqref{eq-MA-c}-\eqref{eq-MA-c-boundary},
set $u_c=u \circ A_c^+$. Then
\begin{align*}
\det D^2 u_c&= 1
\quad\quad\quad\text{   in } V_{\mu^+_c},\\
u_c&=\frac{|x|^2}{2} \quad  \text{ on     }\partial V_{\mu^+_c}.
\end{align*}

\begin{cor}\label{cor-reg-c}
Let $c\in (0,1)$ be a constant such that $\frac{1}{\mu^+_c}=\frac{\pi}{\arccos \sqrt{1-c} }$ is not an integer.
Then $\overline{P}_c \notin C^{\infty}(\overline{V_{1/2}})$.
\end{cor}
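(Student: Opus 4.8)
The plan is to transfer the problem to the model cone $V_{\mu_c^+}$ via the affine map $A_c^+$ and then apply the asymptotic expansion of Theorem \ref{thrm-expansion-1}. Concretely, set $u_c=\overline{P}_c\circ A_c^+$. As recorded above, $u_c$ solves $\det D^2u_c=1$ in $V_{\mu_c^+}$ with boundary data $|x|^2/2$ on $\partial V_{\mu_c^+}$, and from $\overline{P}_c=P_c^++O(|x|^{2+\alpha})$ near the origin together with $P_c^+\circ A_c^+=|x|^2/2$ we get $v_c:=u_c-|x|^2/2=O(|x|^{2+\alpha})$; in particular $v_c\in C^{2,\alpha}$ near $0$ and $D^2v_c(0)=0$, so hypotheses \eqref{v-setup1}--\eqref{v-setup2} hold. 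Since $\mu_c^+=\frac{\arccos\sqrt{1-c}}{\pi}\in(0,1/2)$, Theorem \ref{thrm-expansion-1} applies and yields the expansion \eqref{expansion-1} for $u_c$ with leading exponent $\mu_1=1/\mu_c^+$ and $c_{1,0}(\theta)=c_1\sin\frac{\theta}{\mu_c^+}$ for some constant $c_1$.

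Next I would argue that $\overline{P}_c\in C^\infty(\overline{V_{1/2}})$ forces $v_c$ to be smooth up to the corner of $V_{\mu_c^+}$, and then that smoothness is incompatible with the expansion when $1/\mu_c^+$ is not an integer. Smoothness of $\overline{P}_c$ transfers under the (invertible, linear) map $A_c^+$ to smoothness of $u_c$, hence of $v_c$, up to $0$. Now compare with the expansion: the first nontrivial term is $c_{1,0}(\theta)|x|^{1/\mu_c^+}=c_1|x|^{1/\mu_c^+}\sin\frac{\theta}{\mu_c^+}$, and the error past it is $O\big(|x|^{\mu_2}(-\ln|x|)\big)$ with $\mu_2>1/\mu_c^+$. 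If $1/\mu_c^+\notin\mathbb N$, then $|x|^{1/\mu_c^+}$ is not smooth at the origin (its radial derivatives of order $>1/\mu_c^+$ blow up), so a smooth function cannot have such a term in its expansion unless $c_1=0$. Thus smoothness of $\overline{P}_c$ would give $c_1=0$, i.e. $v_c=O(|x|^{\mu_2}(-\ln|x|))$, and one can iterate: each $\mu_i\notin\mathbb N$ as well (the whole index set $\mathcal I$ consists of non-integers once $1/\mu_c^+$ is irrational or, more carefully, once $1/\mu_c^+\notin\mathbb Z$, since $\mathcal I_1=\{i/\mu_c^+\}$ and $\mathcal I_2$ is built from $1/\mu_c^+-2$ and $1/\mu_c^+$), so all coefficients $c_{i,j}$ vanish and $v_c\equiv 0$ near $0$ by the expansion, hence $v_c\equiv0$ by analyticity of solutions to the nondegenerate Monge-Amp\`ere equation. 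But $v_c\equiv0$ means $\overline{P}_c=P_c^+$ on $V_{\mu_c^+}$, i.e. $\overline{P}_c$ and $P_c^+$ agree — which contradicts the strict inequality $P_c^-<\overline{P}_c<P_c^+$ (alternatively, $P_c^+$ is not $C^2$ across the corner, so $v_c\equiv0$ already contradicts $\overline{P}_c\in C^\infty(\overline{V_{1/2}})$ being a genuine solution distinct from the singular ones). Either way we reach a contradiction, so $\overline{P}_c\notin C^\infty(\overline{V_{1/2}})$.

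A cleaner route that avoids iterating is to separate out the single term: if $u_c\in C^\infty$ near $0$ then Taylor expand $u_c$ to high order $k>1/\mu_c^+$ and subtract; comparing with \eqref{expansion-1} for $m$ chosen with $\mu_{m+1}>k$, the only non-polynomial-type terms available are the $c_{i,j}(\theta)|x|^{\mu_i}(-\ln|x|)^j$ with $\mu_i\notin\mathbb Z$, and matching a genuine Taylor polynomial (finitely many terms $|x|^n$, $n\in\mathbb N$, times spherical harmonics) against these forces all $c_{i,j}=0$ with $\mu_i\le k$ — in particular $c_{1,0}=0$ — by linear independence of the functions $\{|x|^{\mu_i}(-\ln|x|)^j\}$ with distinct $(\mu_i,j)$ and distinct from integer powers. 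I would phrase the final step this way.

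The main obstacle is making the "smooth cannot contain $c_{1,0}|x|^{1/\mu_c^+}\sin\frac{\theta}{\mu_c^+}$" step fully rigorous: one must rule out the possibility that the non-smooth leading term is cancelled by contributions hidden in the error estimate. This is handled by the precise form of \eqref{expansion-1} — the error is strictly smaller order, $|x|^{\mu_{m+1}}(-\ln|x|)^m$ with $\mu_{m+1}>\mu_i$ — so after subtracting the smooth Taylor polynomial of $u_c$ and all expansion terms of order $<\mu_1$ (there are none, since $\mu_1=1/\mu_c^+$ is the first index and, being non-integer, contributes nothing to a Taylor polynomial), the term $c_1|x|^{1/\mu_c^+}\sin\frac{\theta}{\mu_c^+}$ must itself be $O(|x|^{\mu_2}(-\ln|x|))$, which is only possible if $c_1=0$. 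Checking that $1/\mu_c^+\notin\mathbb Z$ indeed propagates to $\mu_i\notin\mathbb Z$ for all $i$ (so that the iteration terminates in $v_c\equiv0$) is a short elementary computation with $\mathcal I_1$ and $\mathcal I_2$, using $\mu_c^+\in(0,1/2)$.
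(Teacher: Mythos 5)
Your overall setup is fine and matches the paper: transfer to the model cone via $A_c^+$, invoke the expansion \eqref{expansion-1}, and use that $|x|^{1/\mu_c^+}$ is not smooth at $0$ when $1/\mu_c^+\notin\mathbb Z$. But the argument you build on top of this has a genuine gap. You try to reach a contradiction by showing that smoothness of $u_c$ would force \emph{all} $c_{i,j}$ to vanish, hence $v_c\equiv 0$, hence $\overline P_c=P_c^+$. The iteration rests on the claim that $1/\mu_c^+\notin\mathbb Z$ implies $\mu_i\notin\mathbb Z$ for every $i$, which you call ``a short elementary computation.'' That claim is false. For instance take $\mu=2/5\in(0,1/2)$, so $1/\mu=5/2\notin\mathbb Z$; then $2/\mu=5\in\mathcal I_1$ and $\mu_2=2/\mu-2=3\in\mathcal I_2$. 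In general, writing $1/\mu=p/q$ in lowest terms with $q\ge 2$, the element $q/\mu=p\in\mathcal I_1$ is an integer. Once some $\mu_i$ is an integer, the term $c_{i,0}(\theta)|x|^{\mu_i}$ is not automatically excluded from a Taylor polynomial, so your term-by-term elimination stalls and you never reach $v_c\equiv 0$. (Your alternate ``cleaner route'' only establishes $c_{1,0}=0$, which by itself is not a contradiction since you have no independent lower bound on $c_{1,0}$.)

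The missing ingredient is the paper's Theorem \ref{c10-neg-1}, which you never use. It says: if $v\le 0$ near the vertex then either $v\equiv 0$ or the leading coefficient $c_{1,0}$ is strictly negative. The proof of that theorem is a comparison/Hopf-lemma argument with the harmonic barrier $h=|x|^{1/\mu}\sin(\theta/\mu)$, exploiting that $\det D^2(|x|^2/2-\varepsilon h)<1$. Once you know $v_c<0$ (which follows from $\overline P_c<P_c^+$ transported by $A_c^+$) and $v_c\not\equiv 0$ (strict inequality $P_c^-<\overline P_c<P_c^+$), Theorem \ref{c10-neg-1} gives $c_{1,0}<0$ directly. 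Then your observation that a smooth function cannot carry a nonzero term $c_{1,0}|x|^{1/\mu_c^+}\sin(\theta/\mu_c^+)$ when $1/\mu_c^+\notin\mathbb Z$ (the error in \eqref{expansion-1} being of strictly higher order, so no cancellation is possible) immediately gives $u_c\notin C^\infty$, hence $\overline P_c\notin C^\infty(\overline{V_{1/2}})$. In short: you need a lower bound on $c_{1,0}$, not an upper bound on the whole tail of the expansion; replace the iteration by an appeal to the sign information in Theorem \ref{c10-neg-1}.
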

\begin{proof}
Set \begin{equation*}
u(x)=\overline{P}_c\circ A_c^+,\quad v(x)=u(x)-\frac 12 |x|^2.
\end{equation*}
Then by the property of $\overline{P_c}$ and the above discussion, one knows $v<0$.
From Theorem \ref{c10-neg-1},
the constant $c_{1,0}$ in the expansion \eqref{expansion-1} of $v$ is negative.
Hence, $u\notin C^{\infty}(\overline{V_{1/2}})$.

\end{proof}

In fact, the proof given above shows the optimal regularity of $\overline{P}_c $ near the origin is $C^{ \left[ \frac{1}{\mu^+_c}\right], \frac{1}{\mu^+_c}- \left[\frac{1}{\mu^+_c}\right]}$.

\begin{example}\label{eg-reg-c}
Let $\Omega=(0,1)^2$. Consider the Dirichlet problem
\begin{align}
\label{eq-eg-1}\det D^2 u_{c}&= c
\quad\quad\quad\text{   in } \Omega,\\
\label{eq-eg-bdy1}\ u_c&=\frac{|x|^2}{2} \quad  \text{ on     }\partial \Omega.
\end{align}
\end{example}
When $c=1$ the solution of \eqref{eq-eg-1}-\eqref{eq-eg-bdy1} is given by $u=\frac{|x|^2}{2}$. However, this solution is unstable in $C^{2,\alpha}$ space for any $\alpha\in (0,1)$.
By \cite{LeSavin2023}, for any constant $c\in(0,1)$, \eqref{eq-eg-1}-\eqref{eq-eg-bdy1} admits a unique solution $u_c$.
Moreover, $u_c\in C^{2,\alpha}$ for some $\alpha=\alpha(c)\in(0,1)$ and $ u_c= P_c^+ +O(|x|^{2+\alpha})$.
By the maximum principle, $\frac{|x|^2}{2}<u_c< P_c^+$ in $\Omega$.

A similar argument as in the proof of Corollary \ref{cor-reg-c} yields
the optimal regularity of $u_{c}$ near the origin is $C^{ \left[ \frac{1}{\mu^+_c}\right], \frac{1}{\mu^+_c}- \left[\frac{1}{\mu^+_c}\right]}$
when $\frac{\pi}{\arccos \sqrt{1-c} }$ is not an integer.

It is easy to see the for any fixed $\alpha\in(0,1)$,
 $u_c\notin C^{2,\alpha}$ for $c<1$ sufficient close to $1$.
\smallskip

Next, we turn our attention to the existence of classical solutions for the Dirichlet boundary problem of Monge-Amp\`ere equation on bounded polyhedra.

Let $\Omega$ be a bounded convex polyhedra in $\mathbb{R}^n$.  Let $u$ be a convex function solves
\begin{align}\label{eq-MA-dirichlet-eq}
\det D^2 u&=f
\quad\quad\quad\text{   in } \Omega,\\
\label{eq-MA-dirichlet-boundary}u&=\phi  \quad\quad\text{on }\partial \Omega.
\end{align}
where $f\in C^{\alpha}(\overline{\Omega})$, $f>0$ and $\phi\in C^{2,\alpha}(\overline{\Omega})$ for some $\alpha \in (0,1)$.

Le and Savin \cite{LeSavin2023} proved $u\in C^{2}(\overline{\Omega})$ under condition that there is a convex $C^{2}(\overline{\Omega})$ subsolution $\underline{u}$ to \eqref{eq-MA-dirichlet-eq} with $\underline{u}=\phi$ on $\partial\Omega$.
Meanwhile, they provide an example to show that such a result cannot hold when $n\geq3$.
However, the boundary values in the example they constructed do not satisfy the compatibility conditions at the vertices of the polyhedron. We will construct examples to illustrate that even if the boundary value satisfies the compatibility condition, \eqref{eq-MA-dirichlet-eq}-\eqref{eq-MA-dirichlet-boundary} does not necessarily admit a
$C^{2}(\overline{\Omega})$ convex solution when $n\geq3$.

\begin{example}\label{eg-reg-c}
Let $\Omega=(0,1)^3$ and $P=\frac{x_1^2+x_2^2+x_3^2}{2}-\frac{x_1x_2}{2}$. Then, $\underline{u}=P\in C^{\infty}(\overline{\Omega})$
is a convex solution of \eqref{eq-MA-dirichlet-eq}-\eqref{eq-MA-dirichlet-boundary} for $f=3/4$ and $\phi=P$.

Let $\widehat{f}$ be a smooth function in $\overline{\Omega}$ with $\widehat{f}=f$ on the vertices of $\overline{\Omega}$ and
$0<\widehat{f}<3/4$ on $\overline{\Omega}$ except for vertices.
We claim \eqref{eq-MA-dirichlet-eq}-\eqref{eq-MA-dirichlet-boundary} has no convex solution $u \in C^2(\overline{\Omega})$ for $f=\widehat{f}$ and $\phi=P$.
If not, assume that $u \in C^2(\overline{\Omega})$ is a convex solution for \eqref{eq-MA-dirichlet-eq}-\eqref{eq-MA-dirichlet-boundary}. By the strong maximum principle,
\begin{equation}\label{underlineu-u}
u> \underline{u} \quad\quad\text{   in } \Omega.
\end{equation}
A straightforward calculation yields $u_{12}(0,0,1/2)=-\sqrt{1- \widehat{ f}(0,0,1/2)}<-1/2$ since $u_{12}$ is continuous on $(0,0)\times[0,1]$.
Then, $u(t,t,1/2)< \underline{u}(t,t,1/2)$ for $t>0$ sufficiently small, which contradicts to \eqref{underlineu-u}.

\end{example}

In the example above, $\phi$ satisfies the compatibility condition at the vertex of $\Omega$ and $\det D^2 \underline{u}>\widehat{f} $ on $\overline{\Omega}$ except for vertices. However,
\eqref{eq-MA-dirichlet-eq}-\eqref{eq-MA-dirichlet-boundary} do not admit a convex solution $u \in C^2(\overline{\Omega})$.
\par The following example shows that if
 $\det D^2 \underline{u}=f$ on $\partial\Omega$, \eqref{eq-MA-dirichlet-eq}-\eqref{eq-MA-dirichlet-boundary}
may not admit a convex solution $u \in C^2(\overline{\Omega})$ either.

\begin{example}\label{eg-reg-c}
Let $\Omega=(0,4)^2\times (-2,2)$ and $P=\frac{x_1^2+x_2^2+x_3^2}{2}-\frac{x_1x_2}{2}$.  Then, $\underline{u}=P\in C^{\infty}(\overline{\Omega})$
is a convex solution of \eqref{eq-MA-dirichlet-eq}-\eqref{eq-MA-dirichlet-boundary} for $f=3/4$ and $\phi=P$.

Let $\widehat{f}$ be a smooth function in $\overline{\Omega}$ with $\widehat{f}=3/4$ on $\partial\Omega$,
$0<\widehat{f}\leq 3/4$ in $\Omega$ and $\widehat{f}$ is not identical to $3/4$ in $\Omega$.
Then, \eqref{eq-MA-dirichlet-eq}-\eqref{eq-MA-dirichlet-boundary} has no convex solution $u \in C^2(\overline{\Omega})$ for $f=\widehat{f}$ and $\phi=P$.
If not, assume that $u \in C^2(\overline{\Omega})$ is a convex solution. Set $w=u-\underline{u}$. Then, $w>0$ in $\Omega$, $w=0$ on $\partial\Omega$ and
$w(x_1,x_2,0)=o(x_1^2+x_2^2)$.

Set
\begin{equation*}
A:= \begin{pmatrix}
1 & \frac{1}{\sqrt{3}}\\[3pt]
 0 & \frac{2}{\sqrt{3}}
 \end{pmatrix}
 \quad\text{ and  }\quad
\widetilde{A}:= \begin{pmatrix}
1 & \frac{1}{\sqrt{3}} &0\\[3pt]
 0 & \frac{2}{\sqrt{3}}&0\\
 0& 0& 1
 \end{pmatrix} .
\end{equation*}
 Then $A^{-1}V_{2/3}=V_{\frac 12}$. Denote $\widetilde{\Omega}=\widetilde{A}^{-1}\Omega$,
$\widetilde{u}=u\circ \widetilde{A}$,
$\widetilde{\underline{u}}=\underline{u}\circ \widetilde{A}$ and $\widetilde{w}=w\circ \widetilde{A}\in C^2(\overline{\widetilde{\Omega}})$. Then,
$\widetilde{\underline{u}}=\frac{|x|^2}{2}$. $\widetilde{w}>0$ in $\widetilde{\Omega}$, $\widetilde{w}=0$ on $\partial\widetilde{\Omega}$ and
\begin{equation}\label{w=o}
\widetilde w(x_1,x_2,0)=o(x_1^2+x_2^2).
\end{equation}
Also, we have
\begin{equation*}
\sum_{i,j=1}^3 a_{ij}\widetilde{w}_{ij}\leq 0 \quad\quad\text{   in }\widetilde{ \Omega},
\end{equation*}
where
$$a_{ij}=\int_{0}^{1}cof\big(D^2 \widetilde{\underline{u}}+t(D^2 \widetilde u-D^2 \widetilde{\underline{u}})\big)dt $$
and cof$(M)$ is the co-factor matrix of $M$.
For $x'=(x_1,x_2)\in\overline{ V_{2/3}}\subseteq \mathbb{R}^2$, set $$h(x_1,x_2)=r'^{3/2}\sin\frac{3\theta}{2}$$
 where $r', \theta$ are the coordinate components of $x'$ in polar coordinates. Set
 $$v(x_1,x_2,x_3)=v(x_1,x_2,0)=h^{5/4}.$$
  A straightforward calculation yields
 $$ \Delta v=\frac{45}{64}(r')^{-1/8}(\sin \frac{3\theta}{2})^{-3/4}, $$
and $$|v_{ij}|\leq C\Delta v,\,\, i,j=1,2,3.$$
Note that $a_{ij}\in C(\overline{\widetilde{\Omega}})$ and $a_{ij}=\delta_{ij}$ on $\partial\widetilde{\Omega}$. Then, there exists $\delta>0$ small such that
\begin{equation*}
v-\frac{x_3^2}{100}\leq 0 \quad\quad\text{   on }(V_{2/3}\bigcap B_{\delta}(0))\times \{x_3=\pm1\},
\end{equation*}
and
\begin{equation*}
\sum_{i,j=1}^3 a_{ij}\left(v-\frac{x_3^2}{100}\right)_{ij}\geq 0 \quad\quad\text{   in }(V_{2/3}\bigcap B_{\delta}(0))\times [-1,1].
\end{equation*}
By the Hopf's lemma, we have $\varepsilon \left(v-\frac{x_3^2}{100}\right)\leq \widetilde{w}$  on $\partial \bigg((V_{2/3}\bigcap B_{\delta}(0))\times [-1,1]\bigg)$ for $\varepsilon>0$ sufficiently small.

Hence, by the maximum principle, $\widetilde{w}\geq \varepsilon\left(h-\frac{x_3^2}{100}\right)$ in $(V_{2/3}\bigcap B_{\delta}(0))\times [-1,1]$.
In particular, $\widetilde{w}(t,t,0)\geq c t^{15/8}$ for some constant $c>0$ and $t>0$ sufficiently small. This contradicts  \eqref{w=o}.

\end{example}


\end{document}